\theoremstyle{plain}
\newtheorem{theorem}{Theorem}[section]
\newtheorem{lemma}[theorem]{Lemma}
\newtheorem{proposition}[theorem]{Proposition}
\newtheorem{corollary}[theorem]{Corollary}
\theoremstyle{definition}
 \theoremstyle{remark}
\newtheorem{remark}[theorem]{Remark}
\begin{document}
\newcommand{\RR}{{\mathbb R}}
 \newcommand{\N}{{\mathbb N}}
\newcommand{\Rn}{{\RR^n}}
\newcommand{\ieq}{\begin{equation}}
\newcommand{\eeq}{\end{equation}}
\newcommand{\ieqa}{\begin{eqnarray}}
\newcommand{\eeqa}{\end{eqnarray}}
\newcommand{\ieqas}{\begin{eqnarray*}}
\newcommand{\eeqas}{\end{eqnarray*}}
\newcommand{\Bo}{\put(260,0){\rule{2mm}{2mm}}\\}

\numberwithin{equation}{section}

\def\at#1{{\bf #1}: }
\def\att#1#2{{\bf #1}, {\bf #2}: }
\def\attt#1#2#3{{\bf #1}, {\bf #2}, {\bf #3}: }
\def\atttt#1#2#3#4{{\bf #1}, {\bf #2}, {\bf #3},{\bf #4}: }
\def\aug#1#2{\frac{\displaystyle #1}{\displaystyle #2}}
\def\figura#1#2{
\begin{figure}[ht]
\vspace{#1}
\caption{#2}
\end{figure}}
\def\B#1{\bibitem{#1}}
\def\q{\int_{\Omega^\sharp}}
\def\z{\int_{B_{\bar{\rho}}}\underline{\nu}\nabla (w+K_{c})\cdot
\nabla h}
\def\a{\int_{B_{\bar{\rho}}}}
\def\b{\cdot\aug{x}{\|x\|}}
\def\n{\underline{\nu}}
\def\d{\int_{B_{r}}}
\def\e{\int_{B_{\rho_{j}}}}
\def\LL{{\mathcal L}}
\def\D{{\mathcal D}}
\def\itr{\mathrm{Int}\,}
\def\tg{\tilde{g}}
\def\A{{\mathcal A}}
\def\S{{\mathcal S}}
\def\H{{\mathcal H}}
\def\M{{\mathcal M}}
\def\MM{{\mathcal M}}
\def\T{{\mathcal T}}
\def\N{{\mathcal N}}
\def\I{{\mathcal I}}
\def\F{{\mathcal F}}
\def\J{{\mathcal J}}
\def\E{{\mathcal E}}
\def\P{{\mathcal P}}
\def\HH{{\mathcal H}}
\def\V{{\mathcal V}}
\def\B{{\mathcal B}}
\newcommand{\average}{{\mathchoice {\kern1ex\vcenter{\hrule
height.4pt width 6pt depth0pt}
\kern-11pt} {\kern1ex\vcenter{\hrule height.4pt width 4.3pt
depth0pt} \kern-7pt} {} {} }}
\newcommand{\ave}{\average\int}
\title[Radial fractional Laplace operators and Hessian inequalities]{Radial fractional Laplace operators\\ and Hessian inequalities}


\author[F. Ferrari]
{Fausto Ferrari}
\address{
 Dipartimento di Matematica, dell'Universit\`a di Bologna, Piazza di Porta S. Donato, 5, 40126, Bologna, Italy}
\email{fausto.ferrari@unibo.it}

\author[I. E. Verbitsky]
{Igor E. Verbitsky}
\address{Department of Mathematics,
University of Missouri,
Columbia, MO 65211, USA}
\email{verbitskyi@missouri.edu}

\date{\today}

%
\keywords{Fractional Laplacian, $k$-th Hessian operators, radially symmetric functions, hypergeometric function, log-convexity}

\subjclass{35J60, 35J70
}

\begin{abstract}
In this paper we deduce a formula for the fractional Laplace operator 
 $(-\Delta)^{s}$ on  radially symmetric functions useful for  some applications. We  give a criterion of subharmonicity associated with $(-\Delta)^{s}$, and  apply it  to 
a problem related to the Hessian inequality of Sobolev type: 
$$\int_{\mathbb{R}^n}\left \vert (-\Delta)^{\frac{k}{k+1}} u\right \vert^{k+1} dx \le 
C \, \int_{\mathbb{R}^n} - u \, F_k[u] \, dx,
$$
where $F_k$ is the $k$-Hessian operator on $\mathbb{R}^n$, $1\le k < \frac n 2$, under some restrictions on a $k$-convex function $u$. In particular, we show that the class of $u$ for which the above inequality was 
established in \cite{FFV} contains the extremal functions for the Hessian Sobolev 
inequality of X.-J. Wang \cite{W1}. This is proved using  logarithmic convexity of the Gaussian ratio of 
hypergeometric functions which might be of independent interest. 

\end{abstract}

\maketitle

\section{Introduction}

 Let $n \ge 2$. For every $s\in (0,1)$ and a locally integrable function $u:\mathbb{R}^n\to\mathbb{R}$ such that 
 \begin{equation}\label{integrability}
\int_{\mathbb{R}^n}\frac{\mid u(x)\mid}{(1+\mid x\mid^2)^{\frac{n+2s}{2}}}dx<+\infty,
\end{equation}
we  define the $s$-fractional Laplace operator as follows: 
\begin{equation}\label{lap-def}
(-\Delta)^{s}u(x)=c_{s,n} \,\int_{\mathbb{R}^n}\frac{u(x)-u(y)}{\mid x-y\mid^{n+2s}}dy,
\end{equation} 
where $c_{s,n}$ is a positive normalization constant. The integral 
is convergent  (in the principal value sense if $\tfrac 12 \le s<1$) if $u$ is, for instance,  a bounded $C^2$ function. For any  $u$ satisfying (\ref{integrability}), $(-\Delta)^{s} u$ is defined in the sense of distributions:
 \begin{equation}\label{distr}
\langle (-\Delta)^{s} u, \, h \rangle = \int_{\mathbb{R}^n} u \, (-\Delta)^{s} \, h \, dx, 
\end{equation}
for all test functions $h \in C^\infty_0 (\mathbb{R}^n)$.  This is  a linear nonlocal operator, that is, roughly  speaking, if $u$ satisfies $(-\Delta)^{s}u(x)=0$ in a domain $\Omega\subseteq \mathbb{R}^n,$  then the value of $u$ at any point of $\Omega$ depends not  only on the neighborhood of the point itself but also on the behavior of the function in the entire space. 
We refer to \cite{Lan} and \cite{CaS} for further details.

Some new relationships with a class of local nonlinear operators, the $k$-th Hessian operators,  have been pointed out in  \cite{FFV}. We recall  that the $k$-th Hessian operator $F_k$ can be defined in several equivalent ways (see e.g. \cite{W2}). For a $C^2$ function $u$ in a domain $\Omega\subset \mathbb{R}^n$,  we define $F_k[u]$, $k\in \mathbb{N},$ $1\leq k\leq n,$ as the $k$-th symmetric elementary function of the eigenvalues of $D^2u(x),$ the Hessian matrix of $u.$ Just to give an idea of this family of operators we note that $F_1[u]=\Delta u$ when $k=1$, and  $F_n[u]=\mbox{det}(D^2u)$ 
when  $k=n$. A function $u \in C^2(\Omega)$ is called $k$-convex if 
$F_j[u]\ge 0$ for all $j=1, 2, \ldots, k$. This definition was extended to general upper semicontinuous functions by Trudinger and Wang \cite{TW1}, \cite{TW2}. In particular, $u$ is $1$-convex if and only if $u$ is subharmonic, while $u$ is $n$-convex if 
it is convex in the usual sense. 

Clearly, $F_k$ are local operators, since it is possible to calculate $F_k[u]$ pointwise by using second order partial derivatives of $u$.  Moreover, $k$-th Hessian operators are fully nonlinear while 
$s$-fractional Laplace operators are linear. Nevertheless, 
these operators are closely related. It was shown in \cite{FFV} 
(Theorem 2.1) 
that, under certain assumptions on $u$ discussed below, the following inequality holds (see  Proposition~\ref{direct} in this paper for the detailed statement):

\begin{equation}\label{eq-2.int} 
\int_{\mathbb{R}^n}\left \vert (-\Delta)^{s}u\right \vert^{k+1}dx\leq C_{k, n} \, \int_{\mathbb{R}^n}-u \, F_k[u] \, dx,
\end{equation}
where $s=\frac{k}{k+1}$, $1 \le k < \frac n 2$, and   $C_{k, n}$ is 
positive constant depending only on $k$ and $n$. 
The converse inequality holds as well with a different constant, for all $k$-convex $C^2$ functions $u$ vanishing at infinity (\cite{FFV}, Theorem 3.1). 

In this paper we want to improve our knowledge of the $s$-fractional Laplace operator by studying $s$-subharmonic functions that are analogous to subharmonic functions for the Laplace operator, and in particular the class of $k$-convex 
functions introduced in \cite{FFV} for which (\ref{eq-2.int}) holds. In fact we will prove below that this class contains extremal functions of the 
Hessian Sobolev inequality due to 
X.-J. Wang \cite{W1}, \cite{W2}:  
\begin{equation}\label{hessian-sobolev}
\left (\int_{\mathbb{R}^n}\left \vert u\right \vert^{q}dx\right)^{\frac {1}{q}}\leq C'_{k, n} \,\left ( \int_{\mathbb{R}^n}-u \, F_k[u] \, dx\right)^{\frac {1}{k+1}},
\end{equation}
where $1\le k < \frac n 2$, $q=\frac{n(k+1)}{n-2k}$, and $u$ is a $k$-convex $C^2$ function on $\mathbb{R}^n$ vanishing at $\infty$. We remark that (\ref{eq-2.int}) implies (\ref{hessian-sobolev}) by the classical Sobolev embedding theorem. 

 Our approach is to study  the $s$-fractional Laplace operator 
 and the corresponding notion of $s$-subharmonicity 
 for radially symmetric functions $u(x)=u(r)$, $r=|x|$. We will prove the following result.  
\begin{theorem}\label{radialrappresentation}
Let $s\in (0,1).$ For every radial $C^2$ function  $u$ such that
\begin{equation}\label{ineFFV}
\int_{0}^{+\infty}\frac{\mid u(r)\mid}{(1+r)^{n+2s}} \, r^{n-1} \, dr<+\infty,
\end{equation}
 the following formula holds: 
 \begin{equation}\begin{split}\label{rapresth}
(-\Delta)^su(r)= c_{s,n} \, r^{-2s}\int_1^{+\infty}\left(u(r)-u(r\tau)+(u(r)-u(\frac{r}{\tau}))\tau^{-n+2s}\right)\tau( \tau^2-1
  )^{-1-2s} H(\tau)d\tau,
\end{split}
\end{equation}
 where $r=|x|>0$, $x \in \mathbb{R}^n$, and 
$$
H(\tau)=2\pi\alpha_n\int_{0}^{\pi}\sin^{n-2}\!\theta \, \frac{ (  \sqrt{\tau^2-\sin^2\theta}+\cos\theta)^{1+2s}}{\sqrt{\tau^2-\sin^2 \theta}} \, d\theta, \quad \tau \ge 1, \quad 
\alpha_n= \frac{\pi^{\frac{n-3}{2}}} { \Gamma ( \frac{n-1}{2} )}.
$$
\end{theorem}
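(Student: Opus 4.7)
The plan is to exploit the rotational invariance of $(-\Delta)^s$ to collapse the $n$-dimensional integral to a one-dimensional one in the radial variable. Place $x=re_1$, pass to spherical coordinates $y=\rho\omega$ parameterized by $\rho=|y|$, the polar angle $\theta\in[0,\pi]$ with $\cos\theta=\omega\cdot e_1$, and a secondary direction on $S^{n-2}$ which integrates out to $|S^{n-2}|=2\pi\alpha_n$. Rescaling $\rho=r\tau$ yields
\[
(-\Delta)^s u(r) = 2\pi\alpha_n\, c_{s,n}\, r^{-2s}\!\int_0^\infty\!\!\int_0^\pi \frac{[u(r)-u(r\tau)]\,\tau^{n-1}\sin^{n-2}\!\theta}{(1+\tau^2-2\tau\cos\theta)^{(n+2s)/2}}\,d\theta\,d\tau.
\]
I then symmetrize in $\tau$: split $\int_0^\infty=\int_0^1+\int_1^\infty$ and apply $\tau\mapsto 1/\tau$ to the first piece, using the homogeneity $1+\tau^{-2}-2\tau^{-1}\cos\theta=\tau^{-2}(1+\tau^2-2\tau\cos\theta)$; this turns the first integral into one over $(1,\infty)$ with extra weight $\tau^{2s-1}$. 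The combined formula reads
\[
(-\Delta)^s u(r) = 2\pi\alpha_n\, c_{s,n}\, r^{-2s}\!\int_1^\infty \tau^{n-1}\bigl\{[u(r)-u(r\tau)]+[u(r)-u(r/\tau)]\tau^{2s-n}\bigr\}\,I(\tau)\,d\tau,
\]
with $I(\tau):=\int_0^\pi\sin^{n-2}\!\theta\,(1+\tau^2-2\tau\cos\theta)^{-(n+2s)/2}\,d\theta$. Comparing with (\ref{rapresth}) reduces the theorem to the angular identity
\[
\tau^{n-2}(\tau^2-1)^{1+2s}\,I(\tau) = \int_0^\pi \frac{\sin^{n-2}\!\theta\,(\sqrt{\tau^2-\sin^2\theta}+\cos\theta)^{1+2s}}{\sqrt{\tau^2-\sin^2\theta}}\,d\theta, \qquad \tau>1.
\]

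To prove this identity I would use two parallel substitutions. In $I(\tau)$ set $t=\sqrt{1+\tau^2-2\tau\cos\theta}$; then $\cos\theta=(1+\tau^2-t^2)/(2\tau)$ and $(2\tau\sin\theta)^2=P(t):=(t^2-(\tau-1)^2)((\tau+1)^2-t^2)$, with $t$ sweeping $[\tau-1,\tau+1]$, giving $I(\tau)=\tau^{-1}(2\tau)^{3-n}\!\int_{\tau-1}^{\tau+1} t^{1-n-2s}P(t)^{(n-3)/2}\,dt$. In the target right-hand side set $B=\sqrt{\tau^2-\sin^2\theta}+\cos\theta$. The crucial algebraic point is that $B$ is the positive root of $B^2-2B\cos\theta+1=\tau^2$, the dual of the equation defining $t$, whence $\cos\theta=(B^2+1-\tau^2)/(2B)$, $(2B\sin\theta)^2=P(B)$, and $B$ also sweeps $[\tau-1,\tau+1]$; the right-hand side reduces to $2^{3-n}\!\int_{\tau-1}^{\tau+1} B^{3-n+2s}P(B)^{(n-3)/2}\,dB$. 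Matching the two now amounts to
\[
(\tau^2-1)^{1+2s}\!\int_{\tau-1}^{\tau+1} t^{1-n-2s}P(t)^{(n-3)/2}\,dt = \int_{\tau-1}^{\tau+1} t^{3-n+2s}P(t)^{(n-3)/2}\,dt,
\]
which is the content of the reciprocal involution $t\mapsto t'=(\tau^2-1)/t$ on $[\tau-1,\tau+1]$: this map swaps the endpoints, the identities $t^2-(\tau-1)^2=(\tau-1)^2((\tau+1)^2-t'^2)/t'^2$ and $(\tau+1)^2-t^2=(\tau+1)^2(t'^2-(\tau-1)^2)/t'^2$ give $P(t)=(\tau^2-1)^2 P(t')/t'^{4}$, and the Jacobian $dt=-(\tau^2-1)\,dt'/t'^{2}$ supplies the missing factor with the correct total power.

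The genuine obstacle is this angular identity: the preceding reductions are routine bookkeeping, legitimate under (\ref{ineFFV}) and the $C^2$ hypothesis, which together handle the principal-value subtraction via the pointwise cancellation $u(r\tau)+u(r/\tau)-2u(r)=O((\tau-1)^2)$ near $\tau=1$. The crux is to recognize the dual substitution $B=\sqrt{\tau^2-\sin^2\theta}+\cos\theta$, characterized by the mirror quadratic relation, and to identify $t\mapsto(\tau^2-1)/t$ as the compensating involution on $[\tau-1,\tau+1]$; once these are spotted, the identity follows, and combining with the symmetrized formula delivers (\ref{rapresth}).
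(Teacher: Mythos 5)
Your proof is correct, and I verified the key algebra: the reduction $I(\tau)=\tau^{-1}(2\tau)^{3-n}\int_{\tau-1}^{\tau+1} t^{1-n-2s}P(t)^{(n-3)/2}\,dt$ with $P(t)=(t^2-(\tau-1)^2)((\tau+1)^2-t^2)$, the dual reduction of the $H$-integrand to $2^{3-n}\int_{\tau-1}^{\tau+1}B^{3-n+2s}P(B)^{(n-3)/2}\,dB$, and the involution identity
\begin{equation*}
(\tau^2-1)^{1+2s}\int_{\tau-1}^{\tau+1}t^{1-n-2s}P(t)^{(n-3)/2}\,dt=\int_{\tau-1}^{\tau+1}t^{3-n+2s}P(t)^{(n-3)/2}\,dt
\end{equation*}
under $t\mapsto(\tau^2-1)/t$ (the exponent on $\tau^2-1$ collapses to $(1+2s)+(1-n-2s)+(n-3)+1=0$, and the power of $t'$ comes out to $3-n+2s$) all check out. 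The spherical reduction, rescaling, and multiplicative symmetrization $\tau\mapsto1/\tau$ are exactly the paper's, so the two proofs diverge only on the angular kernel identity. There the paper makes a single implicit change of variables $\theta\mapsto\psi$ defined by $\sin\theta/\sqrt{1+\tau^2-2\tau\cos\theta}=\sin\psi/\tau$, which requires tracking $\pm$ branches (the relation is not single-valued in $\psi$), and then uses $(\sqrt{\tau^2-\sin^2\psi}-\cos\psi)(\sqrt{\tau^2-\sin^2\psi}+\cos\psi)=\tau^2-1$ to extract the factor $(\tau^2-1)^{1+2s}$. Your decomposition makes the same algebra transparent: the substitutions $t=\sqrt{1+\tau^2-2\tau\cos\theta}$ and $B=\sqrt{\tau^2-\sin^2\theta}+\cos\theta$ satisfy the mirror quadratics $\tau^2-2\tau\cos\theta+1=t^2$ and $B^2-2B\cos\theta+1=\tau^2$, hence produce the same algebraic weight $P$, and the reciprocal involution on $[\tau-1,\tau+1]$ does the rest. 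In fact, unwinding the paper's substitution on the branch it ultimately uses, one finds $t=(\tau^2-1)/B(\psi)$, i.e., the paper's $\theta\mapsto\psi$ is precisely the composition of your three steps; your factorization avoids the $\pm$ bookkeeping and isolates the reciprocal symmetry that makes the identity hold, which is a genuine gain in clarity.
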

Clearly, $H(\tau)$ is a positive continuous function on $[1, +\infty)$, with 
$H(\tau)\simeq \tau^{2s}$ as $\tau \to +\infty$. One can express $H(\tau)$ in terms of the Gaussian hypergeometric function: 
$$
\tau( \tau^2-1
  )^{-1-2s} H(\tau) = \frac{2 \pi^{\frac{n}{2}}} {\Gamma(\frac{n}{2})} \, 
  \tau^{-1-2s}  \, {_{2}}F_1(a, b, c, \tau^{-2})  \simeq (\tau-1)^{-1-2s},  \quad \tau \ge 1,
$$
where
$$
a=\frac{n+2s}{2}, \quad b=1+s, \quad c=\frac{n}{2}. 
$$
Note that  $_{2}F_1(a, b, c, \tau^{-2})$ has a singularity of order 
$(\tau-1)^{-1-2s}$ at $\tau=1$, while $H(\tau)$ is continuous at $\tau=1$. The integral in (\ref{rapresth}) is convergent for 
any radial $C^2$ function $u$ satisfying (\ref{ineFFV}). 

For every $s\in (0,1]$ and $u$  satisfying 
(\ref{integrability}),  
we  say that $u$ is $s$-subharmonic in  $\Omega$ if  
$u: \Omega \to [-\infty, +\infty)$ is upper semicontinuous in 
$\Omega$, and 
$$
- (-\Delta)^s u(x)\geq 0 \quad \text{in} \, \, D'(\Omega);
$$ 
in other words, $- (-\Delta)^s u$ is a positive Borel measure in $\Omega$.

\begin{theorem}\label{charachter_s_subharmonic_int}  Let $s \in (0, 1]$. Let $u(r)$  be a radial upper semicontinuous   function  in $\mathbb{R}_+$ such that (\ref{ineFFV}) holds.  If, for all $r>0$ and  $\tau \geq 1$, 
\begin{equation}\label{cond-1}
u(r)-u(r\tau)+(u(r)-u(\frac{r}{\tau}))\tau^{-n+2s}\leq 0,
\end{equation} 
 then  $u$ is $s$-subharmonic in $\mathbb{R}^n\setminus\{0\}$. 
\end{theorem}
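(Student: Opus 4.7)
The plan is to approximate $u$ by smooth radial functions that still satisfy (\ref{cond-1}), apply Theorem \ref{radialrappresentation} to each approximant to deduce pointwise non-positivity of $(-\Delta)^{s}$, and then pass to the distributional limit against non-negative test functions supported in $\mathbb{R}^n\setminus\{0\}$.

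The key observation is that under the substitution $t=\log r$, $v(t):=u(e^{t})$, the hypothesis (\ref{cond-1}) with $\sigma=\log\tau\geq 0$ becomes the translation-invariant inequality
$$v(t)\bigl(1+e^{-(n-2s)\sigma}\bigr)\leq v(t+\sigma)+v(t-\sigma)\,e^{-(n-2s)\sigma},\quad t\in\mathbb{R},\ \sigma\geq 0.$$
Condition (\ref{ineFFV}), together with the fact that the weight $(1+e^{t})^{-(n+2s)}e^{nt}$ is bounded below on compact intervals, gives $v\in L^{1}_{\mathrm{loc}}(\mathbb{R})$, so I can convolve $v$ in the $t$-variable against a standard non-negative smooth mollifier $\psi_{\delta}$. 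The resulting $v_{\delta}\in C^{\infty}(\mathbb{R})$ inherits the same inequality pointwise: multiply the inequality for $v(t-s)$ by $\psi_{\delta}(s)\geq 0$ and integrate in $s$. Setting $u_{\delta}(r):=v_{\delta}(\log r)$ therefore produces a smooth radial function on $(0,\infty)$ still satisfying (\ref{cond-1}) and (\ref{ineFFV}).

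I then apply Theorem \ref{radialrappresentation} to each $u_{\delta}$: since $\tau(\tau^{2}-1)^{-1-2s}H(\tau)>0$ for $\tau>1$ and the bracketed factor in (\ref{rapresth}) is non-positive by construction, $(-\Delta)^{s}u_{\delta}(r)\leq 0$ for every $r>0$, i.e.\ $-(-\Delta)^{s}\tilde u_{\delta}\geq 0$ pointwise on $\mathbb{R}^n\setminus\{0\}$, where $\tilde u_{\delta}(x):=u_{\delta}(|x|)$. For any non-negative $h\in C_{c}^{\infty}(\mathbb{R}^n\setminus\{0\})$, the distributional definition (\ref{distr}) now yields
$$\langle -(-\Delta)^{s}\tilde u,h\rangle=-\int_{\mathbb{R}^n}\tilde u\,(-\Delta)^{s}h\,dx=\lim_{\delta\to 0}\int_{\mathbb{R}^n}\bigl(-(-\Delta)^{s}\tilde u_{\delta}\bigr)\,h\,dx\geq 0,$$
which is the required $s$-subharmonicity.

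The main obstacle is justifying the passage to the limit in the displayed identity. Two ingredients are needed: the standard decay $(-\Delta)^{s}h(x)=O\bigl((1+|x|)^{-n-2s}\bigr)$ for compactly supported $h$, which makes both integrals absolutely convergent by virtue of (\ref{integrability}); and the convergence $\tilde u_{\delta}\to\tilde u$ in the weighted space $L^{1}\bigl(\mathbb{R}^n;(1+|x|)^{-n-2s}\,dx\bigr)$. By polar coordinates the latter reduces to $L^{1}$-convergence of $v_{\delta}$ against the integrable weight $(1+e^{t})^{-(n+2s)}e^{nt}$, which in turn follows from $v_{\delta}\to v$ in $L^{1}_{\mathrm{loc}}(\mathbb{R})$ combined with a uniform weighted $L^{1}$-bound on $v_{\delta}$ obtained from Young's inequality and the quasi-invariance of the weight under shifts of size at most $\delta$.
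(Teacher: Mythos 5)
Your proposal is correct and takes essentially the same route as the paper: your mollification of $v(t)=u(e^t)$ by an ordinary mollifier in the $t=\log r$ variable is exactly the Mellin convolution $u_\epsilon(\tau)=\int_0^\infty \phi_\epsilon(\tau/t)\,u(t)\,dt/t$ used in the paper's proof, and the application of Theorem~\ref{radialrappresentation} to the resulting smooth approximants is identical. The only cosmetic difference is in how the limit is justified: you argue via weighted $L^1$ convergence of $u_\delta\to u$, whereas the paper transfers the mollifier onto $\psi_1(t)=t^n\psi(t)$ (the spherical mean of $(-\Delta)^s h$ multiplied by $t^n$) and estimates $|\psi_1 - \phi_\epsilon * \psi_1|$ via the mean value inequality and the explicit decay bounds on $\psi_1$ and $\psi_1'$; both ultimately rest on the same estimate $|(-\Delta)^s h(x)|\lesssim(1+|x|)^{-n-2s}$.
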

If in Theorem~\ref{charachter_s_subharmonic_int} one assumes   
additionally that $u$ is bounded above in a neighborhood of $0$, or more generally $\limsup_{r\to 0} u(r) \, r^{n-2s}\le 0$, then $u$ has  a removable singularity at $0$ (see [BH], p. 379, Corollary 10.2), i.e.,  $u$ is $s$-subharmonic in the entire space $\mathbb{R}^n$ (with $u(0)=\limsup_{r\to 0} u(r)$). \medskip 

We observe that condition (\ref{cond-1}) is of interest for any real $s$. 
The following theorem provides a  more convenient  pointwise   
characterization of (\ref{cond-1}). Combined with 
Theorem~\ref{charachter_s_subharmonic_int}, it gives a useful sufficient condition for a radially symmetric function to be  $s$-subharmonic when $0<s<1$.

\begin{theorem}\label{s-subharm}
Let $s\in \mathbb{R}.$ Let $u \in C^2 (\mathbb{R}_+)$ be a radial function. Then  
 (\ref{cond-1}) holds if and only if, for all $r > 0$,  
 \begin{equation}\label{cond-2}
 u''(r) + (n-2s +1) \frac{ u'(r)}{r}\ge 0. 
 \end{equation} 
\end{theorem}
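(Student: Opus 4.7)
The plan is to fix $r>0$, set $m=n-2s$, and study the one-variable function
\[
\phi(\tau):=u(r)-u(r\tau)+\bigl(u(r)-u(r/\tau)\bigr)\tau^{-m},\qquad \tau\ge 1,
\]
so that (\ref{cond-1}) at $r$ becomes the assertion $\phi(\tau)\le 0$ on $[1,+\infty)$. The key preliminary observation is that (\ref{cond-2}) is exactly $\tfrac{d}{d\rho}\bigl[\rho^{m+1}u'(\rho)\bigr]\ge 0$; that is, the auxiliary function $w(\rho):=\rho^{m+1}u'(\rho)$ is nondecreasing on $(0,+\infty)$.

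For the necessity direction I will differentiate $\phi$ twice at $\tau=1$. A routine calculation using only $u\in C^{2}(\mathbb{R}_{+})$ gives $\phi(1)=\phi'(1)=0$ and
\[
\phi''(1)=-2r^{2}\Bigl[u''(r)+(m+1)\tfrac{u'(r)}{r}\Bigr].
\]
Since $\phi\in C^{2}$ with $\phi(1)=\phi'(1)=0$, the Taylor expansion $\phi(\tau)=\tfrac12\phi''(1)(\tau-1)^{2}+o((\tau-1)^{2})$ at $\tau=1^{+}$ forces $\phi''(1)\le 0$ whenever $\phi\le 0$ on $[1,+\infty)$, which is precisely (\ref{cond-2}) at~$r$; since $r>0$ was arbitrary, this settles $(\ref{cond-1})\Rightarrow(\ref{cond-2})$.

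For sufficiency I apply the fundamental theorem of calculus to write
\[
\phi(\tau)=-\!\int_{r}^{r\tau}u'(\rho)\,d\rho+\tau^{-m}\!\int_{r/\tau}^{r}u'(\rho)\,d\rho,
\]
substitute $\rho=r\sigma$, and use $u'(\rho)=\rho^{-m-1}w(\rho)$ to bring the monotone function $w$ into evidence. The decisive step is then the substitution $\sigma=\tau t$ in the first of the two resulting integrals, which transports it onto the same domain $[1/\tau,1]$ as the second and produces
\[
r^{m}\phi(\tau)=-\tau^{-m}\int_{1/\tau}^{1}t^{-m-1}\bigl[w(r\tau t)-w(rt)\bigr]\,dt.
\]
Since $r\tau t\ge rt$ for $\tau\ge 1$ and $t>0$, the monotonicity of $w$ makes the bracket nonnegative and $\phi(\tau)\le 0$ follows.

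The main obstacle I expect is finding this alignment substitution in the sufficiency step: the two integrals produced by the FTC naturally sit on disjoint intervals (one above $r$, one below), so their signed combination has no obvious sign until both are brought onto a common domain where the monotonicity of $w$ can act pointwise. Once $\sigma=\tau t$ is in hand, the rest is automatic and works uniformly for every real $s$: the factor $r^{m}>0$ is harmless, $t^{-m-1}>0$ on $[1/\tau,1]$ for any $m\in\mathbb{R}$, and the integration interval is bounded away from $0$, so no integrability issue arises.
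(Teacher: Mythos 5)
Your proof is correct. The necessity direction is essentially the same as the paper's: both amount to a second-order Taylor expansion of the left-hand side of (\ref{cond-1}) at $\tau=1$ (the paper phrases it as dividing by $(\xi-1)^2$ and letting $\xi\to1^+$, after a preliminary expansion computed just before the proof). Your computation of $\phi(1)=\phi'(1)=0$ and $\phi''(1)=-2r^2\bigl[u''(r)+(n-2s+1)u'(r)/r\bigr]$ checks out, and the conclusion follows.

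The sufficiency direction, however, is a genuinely different and cleaner argument. The paper passes to logarithmic variables $v(t)=u(e^t)$, $\gamma=n-2s$, fixes $t$, sets $\phi(x)=v(t+x)-v(t)$, and then carries out a two-step Gronwall-flavored derivative argument: it shows $\phi'+\gamma\phi$ is nondecreasing, shows $e^{-\gamma x}\phi'(-x)$ is nonincreasing, and combines these to prove $\psi(x)=\phi(-x)+\phi(x)e^{\gamma x}$ has nonnegative derivative on $[0,\infty)$. (Incidentally, the paper's extra hypothesis ``$u'(r)\ge0$'' in this direction appears never to be used.) Your approach instead makes the monotone auxiliary function $w(\rho)=\rho^{n-2s+1}u'(\rho)$ explicit from the start (which is the same monotone object lurking in the paper's $e^{\gamma x}$ manipulations), writes the two differences $u(r)-u(r\tau)$ and $u(r)-u(r/\tau)$ as integrals via the FTC, and then uses the substitution $\sigma=\tau t$ to align both integrals onto the common interval $[1/\tau,1]$, yielding the single closed form
\[
r^{n-2s}\phi(\tau)=-\tau^{-(n-2s)}\int_{1/\tau}^{1}t^{-(n-2s)-1}\bigl[w(r\tau t)-w(rt)\bigr]\,dt\le0.
\]
This exposes the sign in one stroke, with a pointwise nonnegative integrand, rather than chasing derivative estimates. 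Both proofs are equally elementary and both work for all real $s$; yours is shorter and, I think, structurally more transparent, because the reason (\ref{cond-1}) holds (monotonicity of $w$ acting on a comparison of $r\tau t$ against $rt$ over a shared domain) is visible at a glance.
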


 Condition (\ref{cond-2}) is not necessary  for a radially symmetric function $u$ to be $s$-subharmonic 
 when $0<s<1$ (see examples below). However, the class of functions 
obeying (\ref{cond-2}) is quite rich, and contains many interesting 
 $s$-subharmonic functions. 
 
 In the special case $s=\frac{n(k-1)}{2k}+1$, condition (\ref{cond-2})  coincides with the 
following characterization of  $k$-convex radially symmetric $C^2$ 
functions (see \cite{W1}, \cite{W2}):
 \begin{equation}\label{cond-3}
 u''(r) + \frac{n-k}{k} \, \frac{ u'(r)}{r} \ge 0, 
 \end{equation} 
 where $k=1,2, \ldots, n$.

For $\beta>0$,  let us consider
  $$
f_\beta(x)=-(1+\mid x\mid^2)^{-\frac{\beta}{2}}, \quad x \in \mathbb{R}^n.
$$
 It is easy to see (cf. Corollary \ref{quasifunda_s_subh_int} below) that for any $s\in \mathbb{R}$, $f_\beta$ satisfies  (\ref{cond-1}), and consequently   (\ref{cond-2}), if and only if $\beta\le n- 2s$. Moreover, if $0<s\le1$ and $0<\beta<n$, then $f_\beta$ is 
$s$-subharmonic in $\mathbb{R}^n$ if and only 
if $\beta\le n- 2s$.

Now let $\beta=\frac n k -2>0$, where $k=1,2, \ldots, \left[ \frac{n}{2}\right]$. Then by (\ref{cond-3}), 
 $f_\beta$ is $k$-convex, and is known to be an extremal function 
 for the Hessian Sobolev inequality (\ref{hessian-sobolev}); see 
 \cite{W1}, \cite{W2}. 
 (A new proof of this inequality, but without the  sharp constant, 
 is available; see  \cite{V}). Our main goal is to show that $u=f_\beta$ 
 satisfies  the following condition introduced in \cite{FFV}: 
   \begin{equation}\label{iterated-intro}
(-\Delta)^{s}[-(-\Delta)^{s}u]^k\geq 0 \quad \text{in} \, \, \mathbb{R}^n,
    \end{equation} 
   where $s=\frac{k}{k+1}$, $1\le k < \frac {n}{2}$.   Under this condition  the enhanced Hessian Sobolev 
   inequality (\ref{eq-2.int}) was proved in \cite{FFV} (Theorem 2.1).    The following theorem  demonstrates  that, in particular,  this class of functions is nontrivial.
   
\begin{theorem}\label{k-convex-example} Let $s=\frac{k}{k+1}$ and $\beta= 
\frac n k -2$, where $1\le k \le \frac n 2$. 
Then 
$f_\beta$ satisfies condition (\ref{iterated-intro}). 
\end{theorem}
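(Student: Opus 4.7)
Set $s=\tfrac{k}{k+1}$, $\beta=\tfrac{n}{k}-2$, and write $u=f_\beta$. Since $\beta \le n-2s$ (direct check), the discussion preceding the theorem (via Theorems~\ref{charachter_s_subharmonic_int} and~\ref{s-subharm}) gives that $u$ is $s$-subharmonic in $\mathbb{R}^n$, hence $w := -(-\Delta)^s u \ge 0$. The strategy is threefold: (a)~obtain a closed form for $w$; (b)~reduce~\eqref{iterated-intro} to a radial ordinary differential inequality for $w^k$; (c)~verify this inequality via logarithmic convexity of a Gaussian ratio of hypergeometric functions.

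\emph{Stage (a).} Using the Mellin transform on the radial variable---the Mellin transform of $(1+r^2)^{-\beta/2}$ is a product of Gamma functions, and $(-\Delta)^s$ acts on radial power functions $r^{-\alpha}$ by an explicit Mellin multiplier---or, alternatively, evaluating the integral in~\eqref{rapresth} directly via Euler's Beta integral, one obtains
$$
w(r) \;=\; A \cdot {}_{2}F_1\!\left(\tfrac{n}{2}+s,\; \tfrac{\beta}{2}+s;\; \tfrac{n}{2};\; -r^2\right)
$$
for an explicit positive constant $A$, with $w$ smooth on $[0,\infty)$, bounded at the origin, and $w(r)\sim c\,r^{-(\beta+2s)}$ as $r\to\infty$.

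\emph{Stage (b).} Since $-w^k$ is upper semicontinuous and bounded at the origin, the removable-singularity remark after Theorem~\ref{charachter_s_subharmonic_int} reduces the claim $(-\Delta)^s w^k \ge 0$ in $\mathbb{R}^n$ to verifying~\eqref{cond-1} for the radial function $-w^k$. By Theorem~\ref{s-subharm} this is equivalent to
$$
(w^k)''(r) \;+\; \tfrac{n-2s+1}{r}\,(w^k)'(r) \;\le\; 0, \qquad r>0,
$$
which, on setting $t=r^2$ and $P(t):=w(\sqrt{t})^k$, becomes $tP''(t) + \tfrac{n-2s+2}{2}\,P'(t)\le 0$; equivalently, $t\mapsto t^{(n-2s+2)/2}\,P'(t)$ is nonincreasing.

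\emph{Stage (c).} Writing $P=A^k F^k$ with $F(t) := {}_{2}F_1(\tfrac{n}{2}+s,\,\tfrac{\beta}{2}+s;\,\tfrac{n}{2};\,-t)$ and dividing by $F^{k-2}>0$, the inequality from stage (b) becomes
$$
t\,F(t)\,F''(t) \;+\; (k-1)\,t\,F'(t)^2 \;+\; \tfrac{n-2s+2}{2}\,F(t)\,F'(t) \;\le\; 0.
$$
Using Gauss's hypergeometric differential equation to eliminate $F''$ converts this into a first-order relation between $F$ and $F'$; dividing by $F^2$ and passing to the logarithmic derivative $-F'/F$ (a ratio of contiguous ${}_{2}F_1$'s) recasts it as a monotonicity, hence log-convexity, statement for a specific Gaussian ratio of hypergeometric functions. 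The principal obstacle is proving this log-convexity with the precise parameters $(a,b,c)=(\tfrac{n}{2}+s,\,\tfrac{\beta}{2}+s,\,\tfrac{n}{2})$ forced by the relations $s=\tfrac{k}{k+1}$ and $\beta=\tfrac{n}{k}-2$; this I would handle by a separate lemma, using either the Euler integral representation of ${}_{2}F_1$ or contiguous function relations.
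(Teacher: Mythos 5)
Your plan reproduces the paper's argument stage for stage: the hypergeometric closed form for $w=-(-\Delta)^s f_\beta$, the reduction via Theorems~\ref{charachter_s_subharmonic_int} and~\ref{s-subharm} to the radial inequality $(w^k)''+\tfrac{n-2s+1}{r}(w^k)'\le 0$, and the elimination of $F''$ via the Gauss hypergeometric ODE. The one place your sketch genuinely under-describes what is needed is the last step. After eliminating $F''$ you do not obtain something that \emph{is} a log-convexity statement; you obtain a quadratic inequality in $\lambda=F'(-x)/F(-x)>0$. Solving that quadratic and applying a Pfaff transformation $t=x/(1+x)$ recasts it as a pointwise bound $f(t)\ge g(t)$ on $(0,1]$, where $f(t)=F(c-a,b,c,t)/F(c-a,b+1,c+1,t)$ is a Gaussian ratio but $g$ is an explicit algebraic function (with a square root) coming from the quadratic formula. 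The paper proves $f\ge g$ by a tangent-line comparison at the endpoint $t=1$: $f$ is convex --- this is precisely where the log-convexity lemma (Theorem~\ref{log-conv}, proved in Section~\ref{logconvexity} via the Euler integral representation and Chebyshev's inequality, exactly the route you anticipate) enters --- while $g$ is concave (Lemma~\ref{lemma-f-g}); moreover $f(1)=g(1)=a/c$ by Gauss's summation formula, and a contiguous-relation computation gives $f'(1)\le g'(1)$, so $f$ lies above and $g$ lies below the common tangent at $t=1$. Hence, in addition to the log-convexity lemma you correctly flag as the principal obstacle, you would also need the concavity of $g$ and the endpoint identities; log-convexity of the Gaussian ratio alone does not close the argument.
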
 

The proof of Theorem~\ref{k-convex-example} for $k \ge 2$ relies on 
Theorems~\ref{charachter_s_subharmonic_int} and \ref{s-subharm}, along with a new logarithmic convexity property of the Gaussian 
quotient of hypergeometric functions $\frac{_{2}F_1(a,b,c,x)}{_{2}F_1(a,b+1,c+1,x)}$. 

It is worth observing that  when $k=1$, the function $-u= (-\Delta)^{\frac 1 2} f_\beta$ with $\beta=n-2$ ($n \ge 3$), fails to satisfy condition (\ref{cond-2}) with $s=\frac 1 2$; however, 
$-u$ is still $\frac 1 2$-subharmonic. The latter 
 is easily checked using the superposition 
property of fractional Laplacians: 
$$
(-\Delta)^{\frac 1 2}[-(-\Delta)^{\frac 1 2} f_\beta] = \Delta f_\beta \ge 0.  
$$ 

The paper is organized as follows. The proofs of Theorems \ref{radialrappresentation} and \ref{charachter_s_subharmonic_int} are given in Section \ref{proofofradialrapresentation}.  In Section \ref{appendixx} we prove Theorem 1.3 and discuss the  $s$-subharmonicity 
property for radially symmetric functions. Some examples 
are given in Section~\ref{examples_remarks}. In Section \ref{applicationsxx} we discuss some further applications of our results. In particular, we prove a Liouville theorem, 
the maximum principle for radial $s$-subharmonic functions, and  a derivative  formula involving the 
fractional Laplacian. 
In Section \ref{fractional-hessian}, we prove Theorem~\ref{k-convex-example} using a convexity property 
of a certain ratio of hypergeometric functions.  Finally, in Section \ref{logconvexity}, we prove logarithmic convexity of 
$\frac{_{2}F_1(a,b,c,x)}{_{2}F_1(a,b+1,c+1,x)}$ and convexity of $\frac{_{2}F_1(a,b,c,x)}{_{2}F_1(a+1,b+1,c+1,x)}$  in $(-\infty, 1)$ (Theorem~\ref{log-conv} and Corollary~\ref{conv2}) under 
certain restrictions on the parameters $a, b, c$ 
using a method developed recently 
in \cite{KS1} (see also \cite{KS2}).  The proofs generalize to ratios   of 
generalized hypergeometric functions  $_{q+1}{\!}F_{q}$ for $q\ge 2$.

\section{Proofs of Theorems \ref{radialrappresentation} 
and \ref{charachter_s_subharmonic_int}}\label{proofofradialrapresentation}

 \begin{proof}[Proof of Theorem~\ref{radialrappresentation}]   
For the sake of convenience, we will drop the normalization constant $c_{s,n}$ in (\ref{lap-def}) when it does 
not lead to a confusion. Let $u$ be a radial $C^2$ function in 
$\mathbb{R}^n$ satisfying (\ref{ineFFV}). If $u(x)=u(|x|),$  $x=r x',$  $y=\rho y',$  and $\mid x'\mid=\mid y'\mid=1,$ then
\begin{equation*}
\begin{split}
&(-\Delta)^{s}u(x)=\int_0^{+\infty}\left(\int_{\mid y'\mid=1}\frac{u(r)-u(\rho)}{\mid rx'-\rho y'\mid^{n+2s}}\rho^{n-1}dH^{n-1}(y')\right)d\rho\\
&=\int_0^{+\infty}(u(r)-u(\rho))\rho^{n-1}\left(\int_{\mid y'\mid=1}\frac{1}{\mid rx'-\rho y'\mid^{n+2s}}dH^{n-1}(y')\right)d\rho\\
&=\int_0^{+\infty}\frac{u(r)-u(\rho)}{r^{n+2s}}\rho^{n-1}\left(\int_{\mid y'\mid=1}\frac{1}{\mid x'-\frac{\rho}{r} y'\mid^{n+2s}}dH^{n-1}(y')\right)d\rho\\
&=r^{-1-2s}\int_0^{+\infty}(u(r)-u(r\tau))\tau^{n-1}r\left(\int_{\mid y'\mid=1}\frac{1}{\mid x'-\tau y'\mid^{n+2s}}dH^{n-1}(y')\right)d\tau\\
&=\int_0^{+\infty}\frac{u(r)-u(r\tau)}{r^{2s}}\tau^{n-1}\left(\int_{\mid y'\mid=1}\frac{1}{\mid x'-\tau y'\mid^{n+2s}}dH^{n-1}(y')\right)d\tau.
\end{split}
\end{equation*}

Notice that
$$
\int_{\mid y'\mid=1}\frac{1}{\mid x'-\tau y'\mid^{n+2s}}dH^{n-1}(y')
$$
is independent of $x'\in\{\mid y\mid=1\}$. Indeed,  suppose that $z' \in\{\mid y\mid=1\}.$ Then there exists a unitary matrix $Q$ such that $z'=Qx'.$ Thus, performing a change of variables such that $y'=Qw',$ we get:
\begin{equation*}
\begin{split}
\int_{\mid y'\mid=1}\frac{1}{\mid z'-\tau y'\mid^{n+2s}}dH^{n-1}(y')&=\int_{\mid w'\mid=1}\frac{1}{\mid Qx'-\tau Qw'\mid^{n+2s}}dH^{n-1}(w')\\
&=\int_{\mid w'\mid=1}\frac{1}{\mid x'-\tau w'\mid^{n+2s}}dH^{n-1}(w'),
\end{split}
\end{equation*}
because $\mid \mbox{det}Q\mid=1$ and $\mid Qv_1-Qv_2\mid=\mid v_1-v_2\mid$ for every $v_1,v_2\in \mathbb{R}^n.$

Moreover, 
\begin{equation*}
\begin{split}
\int_{\mid y'\mid=1}\frac{1}{\langle z'-\tau y',z'-\tau y'\rangle^{\frac{n+2s}{2}}}dH^{n-1}(y')&=\int_{\mid y'\mid=1}\frac{1}{(1-2\tau\langle y',z'\rangle+\tau^2)^{\frac{n+2s}{2}}}dH^{n-1}(y')\\
&=2\pi \alpha_n\int_{0}^{\pi}\frac{\sin^{n-2}\theta}{(1-2\tau\cos\theta+\tau^2)^{\frac{n+2s}{2}}}d\theta,
\end{split}
\end{equation*}
where
$$
\alpha_n=\prod_{k=1}^{n-3}\int_0^{\pi}\sin^k\theta d\theta= \frac{\pi^{\frac{n-3}{2}}} { \Gamma ( \frac{n-1}{2} )}. 
$$
Notice also that
$$
1-2\tau\cos\theta+\tau^2\geq 1-2\tau+\tau^2=(1-\tau)^2.
$$
Thus,
for every $\tau \not =1$, 
$$
 \int_{0}^{\pi}\frac{\sin^{n-2}\theta}{(1-2\tau\cos\theta+\tau^2)^{\frac{n+2s}{2}}}d\theta
 $$
is bounded, while for $\tau=1$, 
$$\frac{\sin^{n-2}\theta}{(2-2\cos\theta)^{\frac{n+2s}{2}}}
 \sim \theta^{n-2}\theta^{-n-2s}\sim \theta^{-2-2s}, \quad \text{as}  \, \, \theta \to 0.
$$

 We denote
  \begin{equation}\label{k-tau}
 K(\tau)=2\pi \alpha_n\int_{0}^{\pi}\frac{\sin^{n-2}\theta}{(1-2\tau\cos\theta+\tau^2)^{\frac{n+2s}{2}}}d\theta, 
  \end{equation}
 and study the behavior of the integral with respect to $\tau.$

 Let $\tau \ge 1$. We perform a change of variable as follows: 
 $$
 \frac{\sin \theta}{\sqrt{\tau^2-2\tau\cos \theta +1}}=
 \frac{\sin\psi} {\tau}.
 $$
 Consequently, 
 $$
 \cos\theta=\frac{\sin^2\psi\pm\cos\psi\sqrt{\tau^2-\sin^2\psi}}{\tau}.
 $$
 Hence
 \begin{equation}\begin{split}\label{deripsi}
-\sin\theta\frac{d\theta}{d\psi}&=\tau^{-1}\sin\psi\left(2\cos\psi\mp\sqrt{\tau^2-\sin^2\psi}\mp\frac{\cos^2\psi}{\sqrt{\tau^2-\sin^2\psi}}\right)\\
&=\frac{\sin\psi}{\tau\sqrt{\tau^2-\sin^2\psi}}\left(\cos\psi\mp\sqrt{\tau^2-\sin^2\psi}\right)^2
\end{split}
\end{equation}
Moreover
\begin{equation}\begin{split}\label{thetasintheta}
\tau\sin\theta &=\sin\psi\sqrt{\tau^2-2\tau\cos\theta+1}\\
&=\sin\psi\sqrt{\tau^2-2(\sin^2\psi\pm\cos\psi\sqrt{\tau^2-\sin^2\psi})+1}\\
&=\sin\psi\sqrt{\tau^2-\sin^2\psi\mp 2\cos\psi\sqrt{\tau^2-\sin^2\psi}+1-\sin^2\psi}\\
&=\sin\psi\sqrt{\tau^2-\sin^2\psi\mp 2\cos\psi\sqrt{\tau^2-\sin^2\psi}+\cos^2\psi}\\
&=\sin\psi\sqrt{(\sqrt{\tau^2-\sin^2\psi}\mp\cos\psi)^2}=\sin\psi\mid \sqrt{\tau^2-\sin^2\psi}\mp\cos\psi\mid
\end{split}
\end{equation}
 We obtain, by plugging the relation (\ref{thetasintheta}) into  (\ref{deripsi}): 

  \begin{equation}\begin{split}\label{deripsib}
\frac{d\theta}{d\psi}&=-\frac{1}{ \mid \sqrt{\tau^2-\sin^2\psi}\mp\cos\psi\mid\sqrt{\tau^2-\sin^2\psi}}\left(\cos\psi\mp\sqrt{\tau^2-\sin^2\psi}\right)^2\\
&=-\mid 1\mp\frac{\cos\psi}{\sqrt{\tau^2-\sin^2\psi}}\mid
\end{split}
\end{equation}

Hence, 
\begin{equation}\begin{split}
& \int_{0}^{\pi}\frac{\sin^{n-2}\theta}{(1-2\tau\cos\theta+\tau^2)^{\frac{n+2s}{2}}}d\theta\\
 &= \int_{0}^{\pi}(\frac{\sin\psi}{\tau})^{n+2s} \sin^{-2-2s}\theta \left(1\mp\frac{\cos\psi}{\sqrt{\tau^2-\sin^2\psi}}\right)d\psi\\
 &= \tau^{2+2s}\int_{0}^{\pi}(\frac{\sin\psi}{\tau})^{n+2s} \left( \sin\psi\mid \sqrt{\tau^2-\sin^2\psi}\mp\cos\psi\mid\right)^{-2-2s}\left(1\mp\frac{\cos\psi}{\sqrt{\tau^2-\sin^2\psi}}\right)d\psi\\
&=\tau^{2-n}\int_{0}^{\pi} \sin^{n-2}\psi\left( \mid \sqrt{\tau^2-\sin^2\psi}\mp\cos\psi\mid\right)^{-2-2s}\left(1\mp\frac{\cos\psi}{\sqrt{\tau^2-\sin^2\psi}}\right)d\psi\\
&=\tau^{2-n}\int_{0}^{\pi} \sin^{n-2}\psi
\frac{
1 }
{ (
 \sqrt{
 \tau^2-\sin^2\psi
 }\mp\cos\psi
  )^{1+2s} \sqrt{\tau^2-\sin^2\psi}}d\psi\\
  &=\tau^{2-n}  ( \tau^2-1
  )^{-1-2s} \int_{0}^{\pi} \sin^{n-2}\psi
\frac{  (
 \sqrt{
 \tau^2-\sin^2\psi
 }+\cos\psi
  )^{1+2s}
}
{\sqrt{\tau^2-\sin^2\psi}}d\psi. 
\end{split}
\end{equation}
Let us denote
\begin{equation}\label{h-tau}
H(\tau)=2\pi\alpha_n \int_{0}^{\pi} \sin^{n-2}\psi
\frac{  (
 \sqrt{
 \tau^2-\sin^2\psi
 }+\cos\psi
  )^{1+2s}
}
{\sqrt{\tau^2-\sin^2\psi}}d\psi.
\end{equation}
Then $H(\tau) = \tau^{n-2}  ( \tau^2-1
  )^{1+2s} \, K(\tau)$. 
Clearly,  $H(\tau)$ is a positive continuous function on $[1, +\infty)$ such that  
$H(\tau)\simeq \tau^{2s}$ as $\tau \to +\infty$. Moreover, it follows from 
(\ref{k-tau}) that, as was mentioned in the Introduction, $H(\tau)$ can be expressed in terms of the hypergeometric function $_{2}F_1(a,b,c, \tau)$ (see \cite{MOS}, p. 55): 
$$
 H(\tau) =  \tau^{-2-2s} ( \tau^2-1
  )^{1+2s} \frac{2 \pi^{\frac{n}{2}}} {\Gamma(\frac{n}{2})} \, 
 _{2}F_1(a, b, c, \tau^{-2}),  \quad \tau \ge 1,
$$
where
$$
a=\frac{n+2s}{2}, \quad b= 1+s, \quad c=\frac{n}{2}. 
$$
On the other hand
\begin{equation}\begin{split}
(-\Delta)^su(r)&=\int_0^{+\infty}\frac{u(r)-u(r\tau)}{r^{2s}}\tau^{n-1}K(\tau)d\tau\\
&=\int_0^{1}\frac{u(r)-u(r\tau)}{r^{2s}}\tau^{n-1}K(\tau)d\tau+\int_1^{+\infty}\frac{u(r)-u(r\tau)}{r^{2s}}\tau( \tau^2-1
  )^{-1-2s} H(\tau)d\tau.
\end{split}
\end{equation}
 Let us consider the following integral
 $$
 \int_0^{1}\frac{u(r)-u(r\tau)}{r^{2s}}\tau^{n-1}K(\tau)d\tau,
 $$
 and perform the following change of variable: $\xi=\frac{1}{\tau}.$ Then
 \begin{equation}\begin{split}
  \int_0^{1}\frac{u(r)-u(r\tau)}{r^{2s}}\tau^{n-1}K(\tau)d\tau&= \int_1^{+\infty}\frac{u(r)-u(\frac{r}{\xi})}{r^{2s}}\xi^{-n+1}\xi^{-2}K(\frac{1}{\xi})d\xi\\
  &=\int_1^{+\infty}\frac{u(r)-u(\frac{r}{\xi})}{r^{2s}}\xi^{-n-1} K(\frac{1}{\xi})d\xi. 
\end{split}
\end{equation}
Notice that
  $$
 K(\frac{1}{\xi})=2\pi \alpha_n \xi^{n+2s}\int_{0}^{\pi}\frac{\sin^{n-2}\theta}{(\xi^{2}-2\xi\cos\theta+1)^{\frac{n+2s}{2}}}d\theta. 
$$
Hence, 
 \begin{equation}\begin{split}
&  \int_0^{1}\frac{u(r)-u(r\tau)}{r^{2s}}\tau^{n-1}K(\tau)d\tau= \int_1^{+\infty}\frac{u(r)-u(\frac{r}{\xi})}{r^{2s}}\xi^{-n+1}\xi^{-2}K(\frac{1}{\xi})d\xi\\
  &=\int_1^{+\infty}\frac{u(r)-u(\frac{r}{\xi})}{r^{2s}}\xi^{-n-1} \xi^{n+2s}K(\xi)d\xi
=  \int_1^{+\infty}\frac{u(r)-u(\frac{r}{\xi})}{r^{2s}}\xi^{-1+2s}\xi^{2-n}( \xi^2-1
  )^{-1-2s} H(\xi)d\xi\\
  &=\int_1^{+\infty}\frac{u(r)-u(\frac{r}{\xi})}{r^{2s}}\xi^{1-n+2s}( \xi^2-1
  )^{-1-2s} H(\xi)d\xi.
\end{split}
\end{equation}
 Thus
 \begin{equation}\begin{split}\label{rapres}
(-\Delta)^su(r)=r^{-2s}\int_1^{+\infty}\left(u(r)-u(r\xi)+(u(r)-u(\frac{r}{\xi}))\xi^{-n+2s}\right)\xi( \xi^2-1
  )^{-1-2s} H(\xi)d\xi.
\end{split}
\end{equation}
The convergence of the integral in (\ref{rapres}) is discussed 
below in Sec.~\ref{appendixx}. 
\end{proof} 

\bigskip

We now deduce Theorem~\ref{charachter_s_subharmonic_int}  
from  Theorem~\ref{radialrappresentation} using mollification 
defined by means of Mellin's convolution which preserves condition (\ref{cond-1}). 

\begin{proof}[Proof of Theorem~\ref{charachter_s_subharmonic_int}]  
Suppose $u$ is a radially symmetric upper semicontinuous function satisfying the conditions  
\begin{equation}\label{cond-int}
\int_0^{+\infty}\frac{\mid u(r) \mid}{(1+r)^{n+2s}} \,  r^{n-1} \, dr < +\infty,
\end{equation}
and 
\begin{equation}\label{pos-cond}
u(r)-u(r\xi)+(u(r)-u(\frac{r}{\xi}))\xi^{-n+2s} \le 0, \quad \text{for all} \, \,  r>0, \, \, \xi \ge 1. 
\end{equation} 
Let us show that $u$ is $s$-subharmonic, i.e., $-(-\Delta)^{s} u\ge 0$ in the sense of distributions. Let 
$\phi \in C^\infty_0 (\mathbb{R})$ so that $\phi \ge 0$, $\phi$ is supported in the interval $|r|\le r_0$, where $0<r_0<+\infty$, and $\int_{\mathbb{R}} \phi(y) dy =1$. 
For $\epsilon>0$, define the approximate identity on $\mathbb{R}_{+}$ by:  
\begin{equation}\label{approx-id}
\phi_\epsilon(r) = \frac{1}{\epsilon} \phi\left(\frac{\log r}{\epsilon}\right), 
\quad r>0. 
\end{equation} 
Then clearly, for every $\epsilon>0$, 
\begin{equation} \label{prob-meas}
\int_0^{+\infty} \phi_\epsilon(r) \frac{dr}{r}=1. 
\end{equation} 

We observe that  (\ref{pos-cond}) is invariant under the Mellin convolution:
\begin{equation}\label{mollific}
u_\epsilon(\tau) =\int_{0}^{+\infty}  \phi_{\epsilon} \left(\frac{\tau}{t}\right) \, u(t) \frac{dt}{t}, 
\quad \tau>0. 
\end{equation} 
Indeed, integrating both sides of (\ref{pos-cond}) against 
$ \phi_{\epsilon} (\frac{\tau}{r}) \frac{dr}{r}$, we obtain:  
\begin{equation}\label{pos-cond1}
u_{\epsilon} (\tau)-u_{\epsilon}(\tau\xi)+(u_{\epsilon}(\tau)-u_{\epsilon}(\frac{\tau}{\xi}))\xi^{-n+2s} \le 0, \quad \text{for all} \, \,  \tau>0, \, \, \xi \ge 1. 
\end{equation} 
Moreover, obviously $u_{\epsilon}\in C^\infty(\mathbb{R}_{+})$, and 
\begin{equation}\label{cond-int1}
\int_0^{+\infty}\frac{|u_{\epsilon}(r)| \, r^{n-1}}{(1+r)^{n+2s}} dr 
\le C \int_0^{+\infty}\frac{|u(r)| \, r^{n-1}}{(1+r)^{n+2s}} dr  < +\infty, 
\end{equation}
where $C=C(\epsilon, r_0, s, n)$ is a positive constant. 
Indeed, by Fubini's theorem, 
\begin{equation*}
\int_0^{+\infty}\frac{|u_{\epsilon}(r)| \, r^{n-1}}{(1+r)^{n+2s}} dr 
\le  \int_0^{+\infty} |u(t)| \int_0^{+\infty} \frac{r^{n}}{(1+r)^{n+2s}} 
\phi_\epsilon\big( \frac{r}{t}\big)  \frac{dr}{r} \,  \frac{dt}{t}
\end{equation*}
\begin{equation*}
= \int_0^{+\infty} |u(t)| \int_0^{+\infty} \frac{\lambda^n t^{n}}{(1+ \lambda t)^{n+2s}} 
\phi_\epsilon\big(\lambda) \frac{d\lambda}{\lambda} \, \frac{dt}{t} 
\le C \int_0^{+\infty}\frac{|u(t)| \, t^{n}}{(1+t)^{n+2s}} \frac{dt}{t}  < +\infty, 
\end{equation*}
where the last estimate follows since $\phi_\epsilon(\lambda)$ vanishes outside the interval $( e^{-\epsilon r_0}, \, e^{\epsilon r_0} )$, and 
(\ref{prob-meas}) holds.

Let $h \in C^\infty_0(\mathbb{R}^n\setminus\{0\})$ be a nonnegative test function supported in  $0< |x|\le R<+\infty$.  
We observe that, since $h$ is compactly supported, it follows from (\ref{lap-def}) that $(-\Delta)^{s} h \in C^\infty(\mathbb{R}^n)$, and 
\begin{equation} \label{est-h}
 \vert (-\Delta)^{s} h(x) \vert  \le \frac{C}{(1+|x|)^{n+2s}},  
  \quad x \in \mathbb{R}^n.
\end{equation} 
In particular, 
\begin{equation*}
\langle (-\Delta)^{s} u, \, h\rangle =  \int_{\mathbb{R}^n} u \, (-\Delta)^{s} \, h\, dx 
\end{equation*}
is well-defined in terms of distributions (see \cite{Lan}, Sec. 1.6). By Theorem~\ref{radialrappresentation} and (\ref{pos-cond1}), $ (-\Delta)^{s} u_\epsilon \ge 0$, and hence,  using (\ref{cond-int}) 
and recalling that $u_\epsilon \in C^\infty(\mathbb{R}^n\setminus\{0\})$, we obtain: 
\begin{equation}\label{est-h1}
\langle (-\Delta)^{s} u_\epsilon, \, h\rangle = \langle u_\epsilon, \, (-\Delta)^{s} h\rangle = \int_{\mathbb{R}^n} u_\epsilon  \, (-\Delta)^{s} h \, dx \ge 0,
\end{equation} 
for every $h \in C^\infty_0(\mathbb{R}^n\setminus\{0\})$, $h \ge 0$. 

It remains to prove the approximation property: 
\begin{equation}\label{approx-prop}
\lim_{\epsilon \to 0}   \int_{\mathbb{R}^n} u_\epsilon \,  (-\Delta)^{s} \, h \, dx =   \int_{\mathbb{R}^n} u \, (-\Delta)^{s} h\, dx. 
\end{equation} 
Denote by $\psi(t)$ the spherical mean of $(-\Delta)^{s} h$:
\begin{equation*}
\psi (t) = \int_{|x'|=1}  (-\Delta)^{s} h (tx')\, d H^{n-1}(x'), \quad t>0.  
\end{equation*} 
Then 
\begin{equation*}
 \int_{\mathbb{R}^n} [u(x) - u_\epsilon(x)] \, (-\Delta)^{s} h (x)\, dx = \int_0^{+\infty}[ u(t)- u_\epsilon(t)] \, \psi(t) \, t^n \frac{dt}{t}.
 \end{equation*}
 We  estimate:
 \begin{equation*}
\left \vert  \int_0^{+\infty} [ u(t)-u_\epsilon (t)] \psi(t) t^n  \frac{dt}{t} 
\right \vert = \left \vert  \int_0^{+\infty} u(t) \left [  \psi (t) t^n - 
 \int_0^{+\infty} \phi_\epsilon \big ( \frac {r}{t} \big )  \psi(r)  r^n
  \frac{dr}{r} \right ] \,  \frac{dt}{t} \right \vert 
  \end{equation*}
   \begin{equation*}
   \le  \int_0^{+\infty}  |u(t)| \,  \int_0^{+\infty} 
   | t^{n} \psi(t) - r^n \psi (r)| \, \phi_\epsilon \big ( \frac {r}{t} \big ) 
    \frac{dr}{r} \, \frac{dt}{t}.
    \end{equation*} 
    We recall that, since $\phi(y)$ is supported in the 
      interval $|y|\le r_0$, it follows that $\phi_\epsilon \big ( \frac {r}{t} \big ) 
$ is supported in the interval where $\mid \log \frac r t \mid \le \epsilon r_0$. Hence, in the above estimates we can assume: 
   \begin{equation*}
  e^{-\epsilon r_0}-1  \le  \frac r t -1 \le e^{\epsilon r_0}-1. 
      \end{equation*} 
      From this  we deduce:
      \begin{equation}\label{eps-delta}
  |r -t| \le \delta t,
      \end{equation} 
      where $\delta = e^{\epsilon r_0}-1\to 0$ as $\epsilon \to 0$.

    We notice that, since $(-\Delta)^{s} h \in C^\infty(\mathbb{R}^n)$, 
it follows that its spherical mean  $\psi$ is infinitely 
differentiable on $[0, +\infty)$ (see, e.g., \cite{Lan}, 
Sec. I.6). 
    We will need the following estimates:  
       \begin{equation}\label{est0-1} 
    |\psi(t)| \le \frac{C} {(1+t)^{n+2s}}, \quad  |\psi'(t)| \le 
     \frac{C} {(1+t)^{n+2s+1}}, \quad t\ge 0,  
      \end{equation}
      where $C$ depends on $h$. Indeed,  $h(x)=0$ for $|x|>R$, and  
      consequently  (\ref{lap-def}) yields, for $t>R$:  
       \begin{equation*}
    \psi(t)= -c_{s,n}  \int_{|y|\le R} h(y)   \int_{|x'|=1} \frac{1}{|y-t x'|^{n+2s}} d H^{n-1} (x')\, dy, 
      \end{equation*} 
       \begin{equation*}
    \psi'(t)= c_{s,n} (n+2s) \, \int_{|y| \le R} 
    h(y)  \int_{|x'|=1} \frac{t-x' \cdot y}{|y-t x'|^{n+2s+2}} d H^{n-1} (x')\, dy  
      \end{equation*} 
      \begin{equation*}
 = c_{s,n} (n+2s) \, t \,  \int_{|y|\le R} h(y)  \int_{|x'|=1} 
    \frac{1}{|y-t x'|^{n+2s+2}} d H^{n-1} (x')\, dy. 
          \end{equation*} 
          Here  $|y-t x'| \ge t-R > \tfrac t 2$ for $t>2R$, which yields (\ref{est0-1}) for $t>2R$.  
      Since $\psi$ is infinitely differentiable on $[0, +\infty)$, it follows  that  (\ref{est0-1}) holds for all $t\ge 0$.

    Let $\psi_1(t) = t^n \psi(t)$. Clearly, estimates (\ref{est0-1}) 
    yield: 
     \begin{equation}\label{est0-2}
     |\psi_1(t)| \le 
     \frac{C \, t^{n}} {(1+t)^{n+2s}}, \quad  |\psi_1'(t)| \le 
     \frac{C \, t^{n-1}} {(1+t)^{n+2s}}, \quad t\ge 0. 
      \end{equation} 
     Invoking  the mean value inequality we estimate: 
 \begin{equation*}
            | t^{n} \psi(t) - r^n \psi (r)| =  | \psi_1(t) - \psi_1 (r)| \le 
            \, |r-t| \, |\psi_1'(\lambda)| \le C \, |r-t| \,   \frac{ \lambda^{n-1}} {(1+\lambda)^{n+2s}}, 
              \end{equation*}
                        for some $\lambda$ between $t$ and $r$. Assuming by (\ref{eps-delta}) that  $|r-t|< \delta t$, we see that $t(1-\delta) \le \lambda \le t(1+\delta)$. Combining 
          the preceding estimates, we obtain:
           \begin{equation*}
            | t^{n} \psi(t) - r^n \psi (r)| \le \frac {C \,  \delta \,  (1+\delta)^{n-1}}{ (1-\delta)^{n+2s}}  \, \frac{t^{n}} { (1+t)^{n+2s}}. 
             \end{equation*} 
            Using this together with (\ref{prob-meas}), we 
            conclude:  
    \begin{equation*}
    \int_0^{+\infty}  |u(t)| \,  \int_0^{+\infty} 
   | t^{n} \psi(t) - r^n \psi (r)| \, \phi_\epsilon \big ( \frac {r}{t} \big ) 
    \frac{dr}{r} \, \frac{dt}{t} \le  \frac {C \,  \delta \,  (1+\delta)^{n-1}}{ (1-\delta)^{n+2s}}
     \int_0^{+\infty}  |u(t)| \,  \, \frac{t^{n-1}} { (1+t)^{n+2s}}dt, 
    \end{equation*} 
where the right-hand side is finite by (\ref{cond-int}). Letting $\epsilon \to 0$, and hence $\delta \to 0$, we conclude 
      the proof of the approximation property (\ref{approx-prop}).                
    \end{proof}

\section{Fractional subharmonicity for radially symmetric functions}\label{appendixx}

In this section we study further the condition 
\begin{equation}\label{positive}
u(r)-u(r\xi)+(u(r)-u(\frac{r}{\xi}))\xi^{-n+2s} \le 0, \quad \text{for all} \, \,  r>0, \, \, \xi \ge 1,
\end{equation}
which ensures that a radially symmetric $C^2$ function $u$ is $s$-subharmonic, i.e., $-(-\Delta)^s u\ge 0$, for $0<s\le 1$. However, this condition makes sense for any real $s$. We will show that 
for $C^2$ functions $u$ it is equivalent to:
\begin{equation}\label{positive1}
u''(r) +(n+1-2s)\frac{u'(r)}{r}\ge 0,  \quad \text{for all} \, \, \,  r>0,
\end{equation} 
which is stated as Theorem~\ref{s-subharm} in the Introduction.

Suppose $u$ is a radial $C^2$ function. It is worth noting that in a neighborhood of $1,$ whenever $\xi\to 1,$ $\xi\geq 1,$
\begin{equation*}\begin{split}
& u(r)-u(r\xi)+(u(r)-u(\frac{r}{\xi}))\xi^{-n+2s}=-u'(r)r\frac{(\xi-1)^2(\xi^{n+1-2s}-1)}{\xi^{n+1-2s}(\xi-1)}-r^2\frac{u''(r)}{2}(\xi-1)^2(1+\frac{1}{\xi^{n+2-2s}})\\
&+\xi^{-n+2s}o((\xi-1)^2)
 =-\frac{1}{2}r^2\frac{(\xi-1)^2(\xi^{n+2-2s}+1)}{\xi^{n+2-2s}}(u''(r)+\frac{2\xi(\xi^{n+1-2s}-1)}{r(1+\xi^{n+2-2s})(\xi-1)}u'(r))+o(\xi-1)^2\\
 &=-\frac{1}{2}r^2\frac{(\xi-1)^2(\xi^{n+2-2s}+1)}{\xi^{n+2-2s}}\mathcal{L}u,
\end{split}
\end{equation*}
where
\begin{equation*}\begin{split}
&\mathcal{L}u=u''+\frac{2\xi(n+1-2s+\frac{n-2s}{2}(\xi-1)+\frac{n-1-2s}{6}(\xi-1)^2+o(\xi-1)^2)}{(1+\xi^{n+2-2s})}\frac{u'}{r}+o(r,(\xi-1)^2)\\
&=u''+(\frac{2\xi(n+1-2s)}{1+\xi^{n+2-2s}}+g(\xi-1))\frac{u'}{r}\\
&=u''+(n+1-2s)\frac{u'}{r}\\
&+\frac{(n+1-2s)(2\xi-1-\xi^{n+2-2s})+\frac{n-2s}{2}(\xi-1)+\frac{n-1-2s}{6}(\xi-1)^2\frac{u'}{r}
 +o(\xi-1)^2)}{(1+\xi^{n+2-2s})}\\&+o(r,(\xi-1)^2)=u''+(n+1-2s)(1+h(\xi-1))\frac{u'}{r},
\end{split}
\end{equation*}
where $\lim_{\xi\to 1^{+}}\frac{h(\xi-1)}{\xi-1}=1.$

In particular, the integral in (\ref{rapresth}) is convergent in a neighborhood of $1,$ since for any fixed $r>0$ it behaves as $(1-\xi)^{1-2s}$ that converges whenever  $0<s<1.$ Moreover, by (\ref{cond-int}) it converges when $\xi\to +\infty$.  Hence the integral in (\ref{rapresth}) is convergent for every $C^2$ function 
$u$ satisfying (\ref{cond-int}). 

The above calculation 
demonstrates that condition (\ref{positive}) is closely related to 
condition (\ref{positive1}) for any $s\in \mathbb{R}$, which is proved below.

\begin{proof}[Proof of Theorem \ref{s-subharm}] Let $s\in \mathbb{R}$. Suppose $r>0$ and $\xi \ge 1$. Dividing both sides of 
(\ref{positive}) by $(\xi-1)^2$ and passing to the limit as $\xi \to 1^{+}$, we deduce (\ref{positive1}). 

Conversely, suppose that (\ref{positive1}) 
holds, and $u'(r) \ge 0$. Let $\gamma=n-2s$. Making a substitution $r=e^t$, $\xi=e^x$, 
where $t\in \mathbb{R}$, and $x \ge 0$, we rewrite (\ref{positive}) 
in the equivalent form:
\begin{equation}\label{positive2}
(v(t+x)-v(t)) e^{\gamma x} +v(t-x)-v(t) \ge 0, \quad \text{for all} \, \, t \in \mathbb{R}, \, \, x \ge 0,
\end{equation}
where $v(t)=u(e^t)$, while (\ref{positive1}) is equivalent to 
\begin{equation}\label{positive5}
v''(t) + \gamma v' (t) \ge 0, \quad \text{for all} \, \, t \in \mathbb{R}.
\end{equation}

We next fix $t \in \mathbb{R}$, and let $\phi(x) = v(x+t) -v(t)$, where $x \in \mathbb{R} $. 
Then clearly $\phi(0)=0$, $\phi'(0)=v'(t)$, and  (\ref{positive5}) is equivalent to 
\begin{equation}\label{positive6}
\phi''(x) + \gamma \phi'(x) \ge 0, \quad \text{for all} \, \,  x \in \mathbb{R}. 
\end{equation}

 We need to prove:
\begin{equation}\label{est}
\phi(x) e^{\gamma x} +\phi(-x) \ge 0, \quad x \ge 0,
\end{equation} 
which is equivalent to (\ref{positive2}). It is not difficult to deduce (\ref{est})  from (\ref{positive6}) using  Gronwall's
 inequality (see \cite{Har}, Sec. 3.1, Theorem 1.1) in the case 
 $\gamma>0$. 
 
Let us prove this directly for all $\gamma \in \mathbb{R}$. By   (\ref{positive6}), it folows that $\phi'(x) + \gamma \phi(x)$ is non-decreasing on $\mathbb{R}$. Since $\phi(0)=0$, we obtain:  
\begin{equation}\label{est1}
\phi'(x) + \gamma \phi(x) \ge \phi'(0), \quad \text{for all} \, \, x \ge 0.
\end{equation}

Let 
\begin{equation*}
\psi(x)= \phi(-x) + \phi(x) e^{\gamma x}, \quad x \in \mathbb{R}.  
\end{equation*} 
We next show that $\psi' (x) \ge 0$ for all $x \ge 0$. By 
(\ref{est1}), 
\begin{equation*}
\psi'(x)=-\phi'(-x)+  \left (\phi'(x) + \gamma \phi(x) \right )e^{\gamma x} \ge -\phi'(-x)+ \phi'(0) e^{\gamma x}, \quad \text{for all} \, \, x \ge 0.
\end{equation*} 
It remains to prove the inequality  
\begin{equation}\label{est2}
\phi'(-x) \le  \phi'(0) \, e^{\gamma x}, \quad \text{for all} \, \, x \ge 0.  
\end{equation} 

Let $g(x)=e^{-\gamma x} \phi'(-x) $. Then 
\begin{equation*}
g'(x)=-\gamma e^{-\gamma x} \phi'(-x) - e^{-\gamma x} \phi''(-x) 
=-e^{-\gamma x} ( \phi''(-x) + \gamma  \phi'(-x) )\le 0, 
\end{equation*} 
by (\ref{positive6}). Hence $g$ is nonincreasing, and consequently $g(x) \le g(0)$ for all $x \ge 0$. This proves 
(\ref{est2}). Thus $\psi' (x)\ge 0$ for all $x \ge 0$, and since $\psi(0)=0$,  we deduce 
$\psi (x) \ge 0$  for all $x \ge 0$. This proves (\ref{est}), which in its turn yields (\ref{positive}). 
  
\end{proof}

\section{Examples of $s-$subharmonic functions and further remarks}\label{examples_remarks}

We consider the fundamental solution of the $k$-Hessian operator $F_k$, 
$$
u(x)=-\mid x\mid^{-(\frac{n}{k}-2)}, \quad x \in \mathbb{R}^n,
$$
for $1\le k \le \frac n 2$, which satisfies the equation
$$
F_k[u]=\delta_0,
$$
in the viscosity sense (see \cite{TW2}, \cite{W2}). 

It is not difficult to verify directly  that $u$ is a radial function satisfying 
the condition 
\begin{equation}\label{positive3}
u(r)-u(r\xi) + (u(r)-u(\frac{r}{\xi}))\xi^{-n+2s} \le 0, \quad \text{for all} \, \,  r>0, \, \, \xi \ge 1,
\end{equation}
if $s=\frac{n(k-1)}{2k}+1$, or equivalently $n-2s= \frac{n}{k}-2$.

More generally, for $\beta>0$, let us consider the function 
 $$
u(x)=-\mid x\mid^{-\beta}, \quad x \in \mathbb{R}^n. 
$$
If $\beta=n-2s$, we have:
$$
\frac{u(r)-u(r\xi)}{u(\frac{r}{\xi})-u(r)}=
\frac{-r^{-(n-2s)}+r^{-(n-2s)}\xi^{-(n-2s)}}{-r^{-(n-2s)}\xi^{n-2s}+r^{-(n-2s)}}
$$
$$=\frac{-1+\xi^{-(n-2s)}}{-\xi^{n-2s}+1}=\xi^{-(n-2s)}.
$$ 
It follows that, for $\beta=n-2s$ and  $s<\frac n 2$, condition (\ref{positive3}) turns into an equality for all $r>0, \,  \xi \ge 1$. Unfortunately the $k$-energy of $u(x)=-\mid x\mid^{-(\frac{n}{k}-2)}$ is unbounded, and 
so such functions cannot be used in Hessian inequalities of the type (\ref{eq-2.int}) or (\ref{hessian-sobolev}) 
discussed above. 

This suggests considering the function 
$$
f_\beta (x)=-(1+\mid x\mid^2)^{-\frac{\beta}{2}}, \quad x \in \mathbb{R}^n,
$$ 
for $\beta>0$. When $\beta= \frac{n}{k}-2$, $f$ is a $k$-convex function of finite $k$-energy. Moreover, $f_\beta$ is known to be  an extremal  function for the Hessian Sobolev inequality (\ref{hessian-sobolev}) (see \cite{W1}, \cite{W2}). 

\bigskip

\begin{corollary}\label{quasifunda_s_subh_int}   Let $s\in \mathbb{R}$ and $\beta>0$. 
Then $f_\beta$ satisfies  (\ref{cond-1}), and consequently   (\ref{cond-2}), if and only if $\beta\le n- 2s$. Moreover, if $0<s\le1$ and $0<\beta<n$, then $f_\beta$ is 
$s$-subharmonic in $\mathbb{R}^n$ if and only 
if $\beta\le n- 2s$. 
\end{corollary}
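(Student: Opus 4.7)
The first assertion reduces, via Theorem~\ref{s-subharm}, to checking (\ref{cond-2}) for $f_\beta$. A direct computation gives $f_\beta'(r)=\beta r(1+r^2)^{-\beta/2-1}$ and $f_\beta''(r)=\beta(1-(\beta+1)r^2)(1+r^2)^{-\beta/2-2}$, so the left side of (\ref{cond-2}) factors as
\[
\beta(1+r^2)^{-\beta/2-2}\bigl[(n-2s+2)+(n-2s-\beta)r^2\bigr].
\]
Since $\beta>0$, this bracket is nonnegative for all $r>0$ precisely when the $r^2$-coefficient satisfies $n-2s-\beta\ge 0$; in that case the constant term $n-2s+2\ge\beta+2>0$ is automatic, while $\beta>n-2s$ makes the bracket negative for large $r$. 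Theorem~\ref{s-subharm} then equates (\ref{cond-1}) with (\ref{cond-2}).

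For the second assertion, sufficiency is short: if $0<s\le 1$ and $0<\beta\le n-2s$, then $|f_\beta|\le 1$ gives (\ref{ineFFV}), (\ref{cond-1}) holds by the previous paragraph, and Theorem~\ref{charachter_s_subharmonic_int} yields $s$-subharmonicity on $\mathbb{R}^n\setminus\{0\}$. Since $\limsup_{r\to 0}f_\beta(r)\,r^{n-2s}=0$, the removable-singularity result quoted right after Theorem~\ref{charachter_s_subharmonic_int} upgrades this to $s$-subharmonicity on all of $\mathbb{R}^n$.

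For the necessity, the plan is to argue by contradiction, reducing to the model $-|x|^{-\beta}$ by scaling. Assume $n-2s<\beta<n$, $0<s\le 1$, and that $f_\beta$ is $s$-subharmonic. Set $f^{t}(x):=t^\beta f_\beta(tx)=-(t^{-2}+|x|^2)^{-\beta/2}$; the scale-equivariance $(-\Delta)^sf^{t}(x)=t^{\beta+2s}((-\Delta)^sf_\beta)(tx)$ shows that each $f^{t}$ is $s$-subharmonic, and $f^{t}\to-|x|^{-\beta}$ monotonically on $\mathbb{R}^n\setminus\{0\}$ as $t\to\infty$. The pointwise domination $|f^{t}(x)|\le|x|^{-\beta}$, together with the universal decay $|(-\Delta)^sh(x)|\le C(1+|x|)^{-n-2s}$ for $h\in C_c^\infty(\mathbb{R}^n\setminus\{0\})$, $h\ge 0$ (see (\ref{est-h})), majorizes $f^{t}(-\Delta)^sh$ by the integrable envelope $|x|^{-\beta}(1+|x|)^{-n-2s}$ (locally integrable at the origin because $\beta<n$). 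Dominated convergence then transfers the inequality $\langle f^{t},(-\Delta)^sh\rangle\le 0$ to $\langle -|\cdot|^{-\beta},(-\Delta)^sh\rangle\le 0$ for every such $h$. On the other hand, the classical Riesz formula
\[
(-\Delta)^s|x|^{-\beta}=\lambda(\beta,s,n)\,|x|^{-\beta-2s},\qquad \lambda(\beta,s,n)=2^{2s}\,\frac{\Gamma(\tfrac{\beta+2s}{2})\,\Gamma(\tfrac{n-\beta}{2})}{\Gamma(\tfrac{\beta}{2})\,\Gamma(\tfrac{n-\beta-2s}{2})},
\]
(see, e.g., \cite{Lan}) places $(n-\beta-2s)/2$ in $(-1,0)$ under our hypotheses, so the denominator Gamma is negative while every other Gamma factor is positive; hence $\lambda<0$. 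Consequently $\langle(-\Delta)^s(-|\cdot|^{-\beta}),h\rangle=-\lambda\int|x|^{-\beta-2s}h(x)\,dx>0$ for any $h\ge 0$, $h\not\equiv 0$, contradicting $\langle -|\cdot|^{-\beta},(-\Delta)^sh\rangle\le 0$. The main obstacle I anticipate is making the distributional passage to the limit fully rigorous — in particular controlling the near-origin contribution where $f^{t}$ grows in $t$ but is tamed by the integrability $\beta<n$ against $(-\Delta)^sh$ — and correctly tracking the sign of $\lambda$ in the nonclassical range $\beta+2s>n$.
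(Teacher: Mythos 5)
Your proof of the first assertion (the $C^2$ characterization) is the same direct computation as the paper's; incidentally, you have the signs of $f_\beta'$ and $f_\beta''$ right, whereas the published proof has two compensating sign typos, so your version is cleaner. The sufficiency half of the second assertion also follows the paper exactly (Theorems~\ref{charachter_s_subharmonic_int} and \ref{s-subharm} plus the removable-singularity remark). Where you diverge is the necessity half. The paper stays with $f_\beta$ itself: it invokes the closed-form identity $-(-\Delta)^s f_\beta = C\cdot {_2}F_1\big(\tfrac{n+2s}{2},\tfrac{2s+\beta}{2},\tfrac n2,-|x|^2\big)$, applies Pfaff's transformation and Gauss's evaluation at $1$ to get the large-$r$ asymptotics (\ref{f-asymp}) with leading constant proportional to $1/\Gamma\big(\tfrac{n-\beta-2s}{2}\big)$, and notes this is negative precisely when $n-2s<\beta<n$, so $-(-\Delta)^s f_\beta$ changes sign. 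You instead rescale $f^t(x)=t^\beta f_\beta(tx)$, pass monotonically (via dominated convergence with the majorant $|x|^{-\beta}(1+|x|)^{-n-2s}$, using $\beta<n$) to the homogeneous model $-|x|^{-\beta}$, and then read off the sign of the Riesz constant $\lambda(\beta,s,n)$, which is negative for the same Gamma-factor reason. Both arguments detect exactly the same sign flip in $\Gamma\big(\tfrac{n-\beta-2s}{2}\big)$, but yours avoids the hypergeometric machinery entirely (at the cost of the scaling-limit argument and the need to invoke the Riesz identity pointwise off the origin, which is indeed the correct sense since $|x|^{-\beta-2s}$ fails local integrability at $0$ in this range). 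The paper's route is more natural in context, since it sets up and reuses the hypergeometric representation (\ref{explicit}) throughout; your route is more self-contained and arguably more transparent about the underlying homogeneity. Both are correct, and the technical concerns you flag at the end (the limit passage and the sign of $\lambda$ in the range $\beta+2s>n$) are in fact handled adequately by the bounds you already have in place.
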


\begin{proof} 
Let $r=|x|$. It is easy to see that
$$
f'_\beta (r) = -\beta r (r^2+1)^{-\frac{\beta}{2}-1}, \quad 
f''_\beta (r) = \beta (r^2+1)^{-\frac{\beta}{2}-2}\left [ (\beta+1) r^2-1 \right]. 
$$
Hence,
$$
f''_\beta (r) + \frac{n-2s+1}{r} f'_\beta (r)  = \beta (r^2+1)^{-\frac{\beta}{2}-2} 
\left [ (\beta-n+2s) r^2 -(n-2s+2)\right]. 
$$
Suppose $0<\beta \le n-2s$. Then the right-hand side of the preceding inequality is negative, and  (\ref{cond-2}) holds. 
If $0<s\le 1$, this implies that $f_\beta$ is $s$-subharmonic. 

If $\beta>n-2s$, then $f''_\beta (r) + \frac{n-2s+1}{r} f'_\beta (r)$ is 
either positive or changes sign. Hence,  (\ref{cond-2}) fails in this case.

Moreover,  by formula 
(\ref{explicit}) discussed below, 
$$
\phi(r)=-(-\Delta)^{s} f_\beta (r)= C(\alpha, \beta, n) \, F(a,b,c, -r^2),
$$
where $C(\alpha, \beta, n)$ is a positive constant, and 
 \begin{equation}\label{parameters-1}  
a= \frac{n+2s}{2}, \quad b= \frac{2s+\beta}{2}, \quad c= \frac{n}{2}, 
   \end{equation}
where $F(x) = _2\!\!\!F_1 (a, b, c, x)$ is the hypergeometric function. Notice that $F(0)=1$ and by a formula due to Gauss 
 (see \cite{AAR}, Theorem 2.2.2, p. 66): 
\begin{equation}\label{parameters-2}  
F(1)=\frac{\Gamma(c) \Gamma(c-a-b)}{\Gamma(c-a) \Gamma(c-b)}, 
\quad \text{if} \, \, \, c>a+b.
   \end{equation}
By Pfaff's transformation,
$$
F(a,b,c,-r^2)=(1+r^2)^{-b} F(c-a,b,c, \frac{r^2}{r^2+1}). 
$$
It follows:  
\begin{equation}\label{f-asymp}
F(-r^2) \sim (1+r^2)^{-b} \frac{\Gamma(\frac{n}{2}) \Gamma(\frac{n-\beta}{2})}{\Gamma(\frac{n+s}{2}) \Gamma(\frac{n-\beta}{2}-s)} 
\quad \text{as} \, \, \, r \to +\infty.
\end{equation}
Hence, when $n-2s<\beta<n$, $0<s<1$, $\phi(r)<0$ for $r$ large, 
and so changes sign. In other words, $f_\beta$ fails to be $s$-subharmonic in this case. 
\end{proof}

\section{Applications}\label{applicationsxx}

We first recall the following definitions. Let $s\in (0, 1]$. Let $u$ be any continuous (or more generally upper semicontinuous) function   $u: \, \mathbb{R}^n \to \mathbb{R}\cup \{-\infty\}$ 
such that 
$$\int_{\mathbb{R}^n }
\frac{| u(x) |}
{(1+ | x |^2)^{\frac{n+2s}{2}}}
dx < +\infty.
$$
We say that $u$ is $s$-subharmonic in $\mathbb{R}^n$  if 
$$
-(-\Delta)^s u\ge 0 \quad \text{in} \, \, D'(\mathbb{R}^n).   
$$
Analogously we shall say that $u$ is $s$-superharmonic 
in $\mathbb{R}^n$ 
$$
 -(-\Delta)^s u\le 0 \quad \text{in} \, \, D'(\mathbb{R}^n).   
$$
Whenever $u$ is both $s$-subharmonic and $s$-superharmonic in 
 we shall say that $u$ is $s$-harmonic
in $\mathbb{R}^n$.

We remark that from the representation given in Theorem \ref{radialrappresentation} we can deduce 
a Liouville theorem for radial $s$-subharmonic  functions.
\begin{corollary}
Assume that $u$ is a positive continuous function, radially decreasing and $s\in (0,1].$ Suppose that
$$
\int_{\mathbb{R}^n}\frac{\mid u(x)\mid}{(1+\mid x\mid^2)^{\frac{n+2s}{2}}}dx<+\infty,
$$
and $-(-\Delta)^s u\geq 0$. If
$\lim_{r\to+\infty}u(r)=0$ then $u=0.$
\end{corollary}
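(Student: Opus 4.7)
I would argue by contradiction via a regularization at the origin. Suppose $u \not\equiv 0$; since $u$ is continuous, nonnegative, and radially decreasing, its supremum is attained at $0$ and $M := u(0) > 0$. The idea is to show that at the origin $(-\Delta)^{s} u$ must be strictly positive, contradicting the $s$-subharmonicity $-(-\Delta)^{s} u \geq 0$.

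Let $\phi_\epsilon$ be a standard nonnegative, radial, $C^\infty_c(\mathbb{R}^n)$ mollifier supported in $B_\epsilon(0)$ with $\int \phi_\epsilon = 1$, and set $u_\epsilon := u * \phi_\epsilon$. Then $u_\epsilon$ inherits the following properties: it is radial, $C^\infty$, nonnegative, radially decreasing (the convolution of two nonnegative radially symmetric-decreasing functions on $\mathbb{R}^n$ remains radially symmetric-decreasing), tends to zero at infinity by dominated convergence, and $u_\epsilon(0) \to M$ as $\epsilon \to 0$ by continuity of $u$, so we may assume $u_\epsilon(0) \geq M/2$. Moreover, since $(-\Delta)^{s}$ commutes with convolution, $(-\Delta)^{s} u_\epsilon = [(-\Delta)^{s} u] * \phi_\epsilon$ as distributions, and convolving the nonpositive distribution $(-\Delta)^{s} u$ with the nonnegative mollifier $\phi_\epsilon$ produces a pointwise nonpositive smooth function; in particular $(-\Delta)^{s} u_\epsilon(x) \leq 0$ for every $x \in \mathbb{R}^n$.

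On the other hand, since $u_\epsilon$ is smooth and satisfies (\ref{integrability}), we may evaluate $(-\Delta)^{s} u_\epsilon$ at the origin directly from \eqref{lap-def}; passing to polar coordinates for the radial $u_\epsilon$ gives
\[
(-\Delta)^{s} u_\epsilon(0) \;=\; c_{s,n} \, \omega_{n-1} \int_0^{+\infty} \frac{u_\epsilon(0) - u_\epsilon(\rho)}{\rho^{2s+1}}\, d\rho,
\]
where $\omega_{n-1}$ denotes the surface area of the unit sphere. By radial monotonicity the integrand is nonnegative, and choosing $R>0$ so large that $u_\epsilon(\rho) \leq M/4$ for $\rho \geq R$ bounds the integrand below by $M/4$ on $[R,\infty)$. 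Hence $(-\Delta)^{s} u_\epsilon(0) \geq \frac{c_{s,n}\,\omega_{n-1}\,M}{8s}\, R^{-2s} > 0$, directly contradicting the nonpositivity of $(-\Delta)^{s} u_\epsilon$ established above. This forces $M = 0$, and thus $u \equiv 0$.

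The main obstacle is the sign-preservation under mollification: one must carefully justify that $(-\Delta)^{s} u_\epsilon = [(-\Delta)^{s} u] * \phi_\epsilon$ is pointwise nonpositive. Because the hypothesis (\ref{integrability}) makes $(-\Delta)^{s} u$ well-defined as a distribution via (\ref{distr}), this follows by pairing $((-\Delta)^{s} u) * \phi_\epsilon$ against a nonnegative test function $\psi \in C^\infty_0(\mathbb{R}^n)$ and using that $\check{\phi}_\epsilon * \psi$ is again a valid nonnegative test function, while $\langle (-\Delta)^{s} u,\, \check{\phi}_\epsilon * \psi\rangle \leq 0$ by the $s$-subharmonicity hypothesis. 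The remaining ingredients (radial-decreasing preservation under convolution, and the validity of the pointwise formula at $0$ for smooth $u_\epsilon$) are standard.
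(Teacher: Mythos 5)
Your proof is correct and takes a genuinely different route from the paper's. You mollify $u$ to $u_\epsilon$, observe that $u_\epsilon$ inherits radial monotonicity and the distributional sign of $(-\Delta)^s u$, and then evaluate $(-\Delta)^s u_\epsilon$ pointwise at the origin, where the nonlocality of the operator and the decay of $u_\epsilon$ at infinity force a strictly positive value --- contradicting $s$-subharmonicity. The paper instead argues from the pointwise condition (\ref{cond-1}): rewriting it for radially decreasing $u$ as $\frac{u(r)-u(r/\xi)}{u(r\xi)-u(r)}\ge \xi^{n-2s}$ and sending $\xi\to\infty$ to contradict the finiteness of $u(0)$ and the vanishing of $u$ at infinity. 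Note, however, that the paper's argument implicitly uses that $s$-subharmonicity of a radially decreasing function implies (\ref{cond-1}), while Theorem~\ref{charachter_s_subharmonic_int} only gives the reverse implication; your mollification argument avoids this issue and is in that sense more self-contained and rigorous. One small caveat: the pointwise integral evaluation at the origin requires $s<1$ (for $s\ge 1/2$ the gradient term vanishes since the origin is a critical point, so the integral is in fact absolutely convergent, but the formula itself is only meaningful for $s\in(0,1)$); the endpoint $s=1$ of the corollary's stated range reduces to the classical maximum-principle Liouville theorem and should be noted separately, though the paper's proof also omits this case.
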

\begin{proof} We remark that if $u$ is radially decreasing then $u(r)\geq u(r\xi)$ and $u(r)\leq u(\frac{r}{\xi})$ for every $\xi\geq 1.$ Hence, recalling Theorem \ref{charachter_s_subharmonic_int} inequality \ref{cond-1},  for every $\xi\geq 1$ and for 
every positive $r$ it results
$$
\frac{u(r)-u(\frac{r}{\xi})}{u(r\xi)-u(r)}\geq \xi^{n-2s}.
$$
On the other hand, for every fixed $r,$ if $\xi\to +\infty,$ then
$$
\lim_{\xi\to+\infty}\frac{u(r)-u(\frac{r}{\xi})}{u(r\xi)-u(r)}=+\infty.
$$
We recall that $\lim_{r\to+\infty}u(r)=0,$ so  we get 
$$
\lim_{\xi\to+\infty}\frac{u(r)-u(0)}{-u(r)}=+\infty, 
$$
which implies a contradiction whenever $u$ is bounded and not identically zero.
\end{proof}

The following result gives a derivative rule for fractional Laplacians.
\begin{theorem}
Let $s\in [0,1].$ For every differentiable  radial  function  $u$  such that
$$
\int_{\mathbb{R}^n}\frac{\mid u(x)\mid}{(1+\mid x\mid^2)^{\frac{n+2s}{2}}}dx<+\infty,
$$
 the following formula holds: 
  \begin{equation}\begin{split}\label{rapresthderd}
\frac{d(-\Delta)^su(r)}{dr}  =\frac{1}{r}(-\Delta)^s(-2su+ru').
  \end{split}
\end{equation}
\end{theorem}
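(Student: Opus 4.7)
The plan is to exploit the radial representation in Theorem~\ref{radialrappresentation} and differentiate under the integral sign. Write $(-\Delta)^s u(r) = c_{s,n}\, r^{-2s}\, I[u](r)$ with
\begin{equation*}
I[u](r) = \int_1^{+\infty}\!\Bigl(u(r)-u(r\tau)+(u(r)-u(r/\tau))\tau^{-n+2s}\Bigr)\tau(\tau^2-1)^{-1-2s}H(\tau)\,d\tau,
\end{equation*}
and apply the product rule to obtain
\begin{equation*}
\frac{d(-\Delta)^s u(r)}{dr} = \frac{1}{r}\Bigl(-2s\, c_{s,n}\, r^{-2s}\, I[u](r) + c_{s,n}\, r^{-2s}\, r\, I[u]'(r)\Bigr).
\end{equation*}
The first term is already $-\tfrac{2s}{r}(-\Delta)^s u(r)$, so all that remains is to recognize $c_{s,n}\, r^{-2s}\, r\, I[u]'(r)$ as $(-\Delta)^s(r\,u')(r)$.

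This identification rests on the following key observation: setting $v(\rho) := \rho\, u'(\rho)$, the operator $r\,\partial_r$ acts on the $\tau$-integrand of $I[u]$ by converting it into the $\tau$-integrand of $I[v]$. Indeed, $r\,\partial_r u(r\tau) = r\tau\, u'(r\tau) = v(r\tau)$ and $r\,\partial_r u(r/\tau) = \frac{r}{\tau}\, u'(r/\tau) = v(r/\tau)$, so differentiating the bracket and multiplying by $r$ yields exactly $v(r) - v(r\tau) + (v(r)-v(r/\tau))\tau^{-n+2s}$. Pulling $r\,\partial_r$ under the integral therefore gives $r\, I[u]'(r) = I[v](r)$, hence $c_{s,n}\, r^{-2s}\, r\, I[u]'(r) = (-\Delta)^s v(r) = (-\Delta)^s(r\, u')(r)$, and linearity of $(-\Delta)^s$ delivers $\frac{d(-\Delta)^s u(r)}{dr} = \frac{1}{r}(-\Delta)^s(-2s\, u + r\, u')$.

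The main obstacle I anticipate is justifying the interchange of $\frac{d}{dr}$ with $\int_1^{+\infty}$ in the step $r\, I[u]'(r) = I[v](r)$. A Taylor expansion of the $r$-differentiated bracket at $\tau = 1$ (parallel to the one carried out in Section~\ref{appendixx} for the undifferentiated bracket) shows that it still vanishes to order $(\tau-1)^2$, so the singularity $(\tau^2-1)^{-1-2s}$ is absorbed into an integrable $(\tau-1)^{1-2s}$ behavior for $s<1$; for the tail $\tau\to+\infty$ one invokes $H(\tau)\simeq \tau^{2s}$ together with the integrability hypothesis now applied to $v = r\, u'$, which will tacitly require $u\in C^2$ with appropriate control on $u'$. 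Conceptually the identity is nothing more than the infinitesimal form of the dilation intertwining $(-\Delta)^s[u(\lambda\,\cdot)](x) = \lambda^{2s}\,(-\Delta)^s u(\lambda x)$ differentiated at $\lambda=1$, and the calculation above is the direct route to making this rigorous inside the representation framework already developed in the paper.
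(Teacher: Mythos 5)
Your proof is correct and follows essentially the same route as the paper's: differentiate the radial representation (\ref{rapresth}) under the integral sign, split off the $-\tfrac{2s}{r}(-\Delta)^s u$ term via the product rule, and recognize the remaining integral as $\tfrac{1}{r}(-\Delta)^s(ru')$ by the substitution $v=\rho u'(\rho)$. Your additional remarks on the $(\tau-1)^2$ cancellation justifying differentiation under the integral, and the interpretation as the infinitesimal form of the dilation intertwining, go slightly beyond what the paper records but do not change the argument.
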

  \begin{proof} Differentiating (\ref{rapresth}), we obtain: 
 \begin{equation*}\begin{split}\label{rapresthderdxy}
&\frac{d(-\Delta)^su(r)}{dr}=-\frac{2s}{r}(-\Delta)^su(r)\\
&+r^{-1-2s}\int_1^{+\infty}\left((u'(r)-u'(r\xi)\xi+(u'(r)-\frac{u'(\frac{r}{\xi})}{\xi})\xi^{-n+2s}\right)\xi( \xi^2-1
  )^{-1-2s} H(\xi)d\xi\\
  &=-\frac{2s}{r}(-\Delta)^su(r)\\
&+r^{-2s}\int_1^{+\infty}\left(r^{-1}\left((ru'(r)-u'(r\xi)r\xi)+(ru'(r)-\frac{u'(\frac{r}{\xi})}{\frac{\xi}{r}})\right)\xi^{-n+2s}\right)\xi( \xi^2-1
  )^{-1-2s} H(\xi)d\xi\\
  &=-\frac{2s}{r}(-\Delta)^su(r)+\frac{1}{r}(-\Delta)^s(ru'(r))=\frac{1}{r}(-\Delta)^s(-2su+ru'),
\end{split}
\end{equation*}
\end{proof}

If
$$
f_k(x)=-(1+r^2)^{-(\frac{n}{2k}-1)},
$$
then
$$
f'_k(r)=2(\frac{n}{2k}-1)(1+r^2)^{-\frac{n}{2k}}r
$$
and
recalling formula (\ref{rapresthderd})
we get
\begin{equation*}
\begin{split}
&-2sf_k(r)+rf'_k(r)=2s(1+r^2)^{-(\frac{n}{2k}-1)}+2(\frac{n}{2k}-1)(1+r^2)^{-\frac{n}{2k}}r^2\\
=&2(1+r^2)^{-\frac{n}{2k}}\left(s(1+r^2)+(\frac{n}{2k}-1)r^2\right)=2(1+r^2)^{-\frac{n}{2k}}\left(s+(s+\frac{n}{2k}-1)r^2\right).
\end{split}
\end{equation*}
Thus
\begin{equation*}\begin{split}\label{rapresthderc}
&\frac{d(-\Delta)^sf_k(r)}{dr}=\frac{1}{r}(-\Delta)^s(-2sf_k+rf_k')=\frac{2}{r}(-\Delta)^s\left((1+r^2)^{-\frac{n}{2k}}\left(s+(s+\frac{n}{2k}-1)r^2\right)\right)\\
&=\frac{2}{r}(-\Delta)^s\left((1+r^2)^{-\frac{n}{2k}}\left((s+\frac{n}{2k}-1)+(s+\frac{n}{2k}-1)r^2\right)\right)\\
&+\frac{2}{r}(-\Delta)^s\left((1+r^2)^{-\frac{n}{2k}}\left(s-(s+\frac{n}{2k}-1)\right)\right)\\
&=\frac{2(s+\frac{n}{2k}-1)}{r}(-\Delta)^s\left((1+r^2)^{-(\frac{n}{2k}-1)}\right)-\frac{2(\frac{n}{2k}-1))}{r}(-\Delta)^s\left((1+r^2)^{-\frac{n}{2k}}\right).
\end{split}
\end{equation*}

In the next theorem we deduce  the maximum principle for radial $s$-subharmonic functions.
 
\begin{theorem} 
Suppose that $u$ is a radial $s$-subharmonic function $s\in (0,1].$ Then either $u$ is constant or $u$ can not realize a maximum in $\mathbb{R}^n.$
\end{theorem}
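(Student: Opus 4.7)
The plan is to combine the radial representation formula (\ref{rapresth}) with the distributional inequality $-(-\Delta)^su\ge 0$ at a putative maximum point. I argue by contradiction: assume $u$ is not constant yet $u(x_0)=M:=\sup_{\mathbb{R}^n}u$ for some $x_0\in\mathbb{R}^n$, and set $r_0:=|x_0|$. By radial symmetry, $u\equiv M$ on the entire sphere of radius $r_0$. I first treat the generic case $r_0>0$.

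I would begin by regularizing $u$ via the Mellin convolution $u_\epsilon(r)=\int_0^{+\infty}\phi_\epsilon(r/t)\,u(t)\,dt/t$ already used in the proof of Theorem \ref{charachter_s_subharmonic_int}. The key structural observation is that, after the substitution $\sigma=\log r$, formula (\ref{rapresth}) presents $r^{2s}(-\Delta)^s$ as a translation-invariant convolution operator in $\sigma$; hence $(-\Delta)^s$ commutes with Mellin convolution on radial functions. Since $\phi_\epsilon\ge 0$ and $r^{2s}>0$, this preserves the distributional sign of $-(-\Delta)^su$, so $u_\epsilon$ is smooth, radial, $s$-subharmonic, and bounded above by $M$, with $u_\epsilon\to u$ in $L^1_{\rm loc}$. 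Upper semicontinuity of $u$ then yields radii $r_{0,\epsilon}\to r_0$ at which $u_\epsilon$ attains its maximum $M_\epsilon$, with $M_\epsilon\to M$.

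Applying Theorem~\ref{radialrappresentation} to $u_\epsilon$ at $r_{0,\epsilon}$, the integrand
\[
\bigl[M_\epsilon - u_\epsilon(r_{0,\epsilon}\tau)\bigr] + \bigl[M_\epsilon - u_\epsilon(r_{0,\epsilon}/\tau)\bigr]\tau^{-n+2s}
\]
is pointwise nonnegative for $\tau\ge 1$, so $(-\Delta)^s u_\epsilon(r_{0,\epsilon})\ge 0$. On the other hand, smoothness of $u_\epsilon$ promotes the distributional inequality to the pointwise bound $(-\Delta)^s u_\epsilon(r_{0,\epsilon})\le 0$. Equality then forces the integrand to vanish a.e., and since both summands are nonnegative they vanish separately: $u_\epsilon(r_{0,\epsilon}\tau)=u_\epsilon(r_{0,\epsilon}/\tau)=M_\epsilon$ for a.e.\ $\tau\ge 1$. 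Continuity of $u_\epsilon$ promotes this to $u_\epsilon\equiv M_\epsilon$ on $(0,+\infty)$; letting $\epsilon\to 0$ and invoking upper semicontinuity of $u$ yields $u\equiv M$ on $\mathbb{R}^n$, contradicting our hypothesis. The degenerate case $r_0=0$ is handled analogously by a direct pointwise computation at the origin: $(-\Delta)^s u_\epsilon(0)=c_{s,n}\int(M_\epsilon - u_\epsilon(y))|y|^{-n-2s}dy\ge 0$, matched against the distributional upper bound. The main obstacle is justifying the commutation of Mellin convolution with $(-\Delta)^s$ at the distributional level without appealing to the stronger pointwise condition (\ref{cond-1}); the translation-invariance viewpoint in $\sigma=\log r$ makes it transparent, but a careful duality argument is needed to implement it.
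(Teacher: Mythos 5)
Your core argument coincides with the paper's: at a maximizer $\bar r$ the integrand in the radial representation (\ref{rapresth}) is pointwise nonnegative for every $\tau\ge 1$, which forces $(-\Delta)^s u(\bar r)\ge 0$; combined with $-(-\Delta)^s u(\bar r)\ge 0$ this forces the integrand to vanish identically, hence $u(\bar r\tau)=u(\bar r/\tau)=u(\bar r)$ for all $\tau$, i.e.\ $u$ is constant. The paper carries this out directly at $\bar r$ with no regularization step, implicitly treating $-(-\Delta)^s u\ge 0$ as a pointwise inequality. You instead insert a Mellin mollification layer to justify working pointwise when $u$ is only upper semicontinuous.

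That added layer, however, opens a gap the paper does not have. You claim that upper semicontinuity yields radii $r_{0,\epsilon}$ at which $u_\epsilon$ attains its maximum, with $M_\epsilon\to M$, but neither assertion follows from what is given. The Mellin average $u_\epsilon$ is bounded above by $M$, but it need not attain its supremum on $(0,+\infty)$ (it could be monotone, with the sup approached only at $0$ or $+\infty$), and even when it does attain a maximum one has no control ensuring $M_\epsilon\to M$: if $u$ equals $M$ only on the single sphere $\{|x|=r_0\}$ and drops sharply away from it, the mollification averages that spike away. Without an attained maximizer for $u_\epsilon$ your argument cannot be run at the level of $u_\epsilon$, and the conclusion ``$u_\epsilon\equiv M_\epsilon$'' is never reached. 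You also flag yourself that the distributional commutation of Mellin convolution with $(-\Delta)^s$ is left as a ``careful duality argument''; that step needs to be spelled out (it is not the same as the preservation of condition (\ref{cond-1}) used in the proof of Theorem \ref{charachter_s_subharmonic_int}, which goes in the opposite direction). The paper's shorter route avoids both issues by evaluating (\ref{rapresth}) directly at the maximizer of $u$; if you want to keep the regularization, you must first establish that $u_\epsilon$ attains an interior maximum and only then invoke the representation formula.
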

\begin{proof}
Indeed, by contradiction, suppose that $u$ realizes an absolute maximum in $\bar{r}\in\mathbb{R}^n.$ Hence for every $\xi\geq 1,$  
$u(\bar{r})-u(\bar{r}\xi)\geq 0,$ $u(\bar{r})-u(\frac{\bar{r}}{\xi})\geq 0$ and as a consequence
$$
u(\bar{r})-u(\bar{r}\xi)+\left(u(\bar{r})-u(\frac{\bar{r}}{\xi})\right)\xi^{2s-n}\geq 0.
$$ 
On the other hand
$$
-(-\Delta)^{s}(\bar{r})\geq 0,
$$
hence for every $\xi\geq 0$
$$
u(\bar{r})-u(\bar{r}\xi)+\left(u(\bar{r})-u(\frac{\bar{r}}{\xi})\right)\xi^{2s-n}=0
$$ 
and in particular this implies $u\equiv u(\bar{r})$ because $u(\bar{r})-u(\bar{r}\xi)\geq 0$ and $(u(\bar{r})-u(\frac{\bar{r}}{\xi}))\geq 0.$
\end{proof}
\begin{corollary}\label{nominimazfra}
If $u$ is a radial $s$-harmonic function, then either $u$ is constant or $u$ does not realize either an absolute maximum  or an absolute minimum.
\end{corollary}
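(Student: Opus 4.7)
The plan is to reduce the corollary directly to the preceding maximum principle by applying it twice: once to $u$ and once to $-u$. Since $(-\Delta)^s$ is linear, $u$ being $s$-harmonic means $-(-\Delta)^s u = 0$ in $D'(\mathbb{R}^n)$, which is equivalent to $u$ being simultaneously $s$-subharmonic and $s$-superharmonic. Moreover, if $u$ is radial then so is $-u$, and $-u$ is $s$-subharmonic because $-(-\Delta)^s(-u) = (-\Delta)^s u \ge 0$.

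First, I would apply the preceding theorem directly to $u$: since $u$ is radial and $s$-subharmonic, either $u$ is constant on $\mathbb{R}^n$, or $u$ cannot attain an absolute maximum. Next, I would apply the same theorem to the radial $s$-subharmonic function $-u$: either $-u$ is constant, or $-u$ cannot attain an absolute maximum. Translating back, the latter dichotomy says that either $u$ is constant, or $u$ cannot attain an absolute minimum.

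Combining the two dichotomies, if $u$ is not constant, then $u$ fails to realize both an absolute maximum and an absolute minimum. This is exactly the statement of the corollary, and the argument is essentially a two-line consequence of the preceding theorem plus the linearity of $(-\Delta)^s$. There is no real obstacle here; the only point worth noting is that the definition of $s$-superharmonicity used in the paper is indeed the sign-flipped version of $s$-subharmonicity, so the passage from $u$ to $-u$ preserves the radial symmetry and the integrability condition (\ref{integrability}), which is the only hypothesis needed to invoke the maximum principle a second time.
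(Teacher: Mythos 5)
Your argument is correct and is exactly the intended one: the paper leaves this corollary without an explicit proof precisely because it follows by applying the preceding maximum principle to $u$ and then, by linearity of $(-\Delta)^s$, to the radial $s$-subharmonic function $-u$. Nothing is missing.
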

\begin{corollary}
Suppose $u_1$ and $u_2$ are two radial $s$-harmonic functions. If $u_1\geq u_2,$ then $u_1>u_2,$ or $u_1\equiv u_2$, i.e., $u_1$ cannot intersect with $u_2$; otherwise $u_1$ coincides with $u_2.$
\end{corollary}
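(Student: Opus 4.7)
The plan is to reduce the statement to Corollary \ref{nominimazfra} by passing to the difference $w = u_1 - u_2$. Since the $s$-fractional Laplacian is a linear operator (as emphasized in the Introduction via the distributional definition in (\ref{distr})), and since $u_1, u_2$ are both radial and $s$-harmonic, the function $w$ is radial and satisfies $(-\Delta)^{s} w = 0$ in $D'(\mathbb{R}^n)$. In particular, $w$ is itself a radial $s$-harmonic function to which the previous corollary applies.

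The hypothesis $u_1 \geq u_2$ gives $w \geq 0$ everywhere on $\mathbb{R}^n$. I would then split into two cases according to whether the infimum $\inf_{\mathbb{R}^n} w$ is attained. If there exists $\bar{x} \in \mathbb{R}^n$ with $w(\bar{x}) = 0$, then $w$ realizes an absolute minimum at $\bar{x}$, since $w \geq 0$. By Corollary \ref{nominimazfra}, a radial $s$-harmonic function that realizes an absolute minimum must be constant, hence $w \equiv 0$, which is precisely $u_1 \equiv u_2$. If instead the value $0$ is never attained, then $w(x) > 0$ for every $x \in \mathbb{R}^n$, which is exactly the strict inequality $u_1 > u_2$.

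There is no real obstacle here beyond checking that the hypotheses of Corollary \ref{nominimazfra} are genuinely available for $w$: one needs radiality (clear by linearity acting on radial functions), the integrability condition (\ref{integrability}) for $w$ (immediate from the triangle inequality applied to the assumed integrability of $u_1$ and $u_2$), and upper/lower semicontinuity to invoke the maximum-principle statement (inherited from the continuity of $u_1$ and $u_2$ that is implicit in the $s$-harmonic hypothesis). Once these are in place the argument is a two-line dichotomy, and no further analysis of the integral representation in Theorem \ref{radialrappresentation} is needed.
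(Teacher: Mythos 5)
Your proof is correct and is essentially the same argument the paper gives: form $w=u_1-u_2$, observe it is radial and $s$-harmonic by linearity, note $w\ge 0$, and if $w$ vanishes at some point it attains its absolute minimum there, so Corollary~\ref{nominimazfra} forces $w\equiv 0$; otherwise $w>0$ everywhere. The extra checks you list (radiality, integrability, semicontinuity) are reasonable but the paper treats them as immediate and does not spell them out.
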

\begin{proof} Indeed, if there exists $\bar{r}$ such that $u_1(\bar{r})=u_2(\bar{r}),$ then $u_1-u_2$ has a minimum in $\bar{r}$ and $u_1-u_2$ is $s$-harmonic. Then $u_1\equiv u_2$ by the previous result.
\end{proof}
\begin{corollary}
Suppose that $u$ is a radial continuous $s$-harmonic function vanishing at infinity. Then $u\equiv 0.$ 
\end{corollary}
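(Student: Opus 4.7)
The plan is a short reduction to Corollary \ref{nominimazfra}, which prohibits a non-constant radial $s$-harmonic function from attaining either an absolute maximum or an absolute minimum in $\mathbb{R}^n$. The vanishing-at-infinity hypothesis converts this prohibition into the conclusion $u\equiv 0$ by a standard compactness argument.

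First I would argue by contradiction, supposing $u\not\equiv 0$. Then at least one of $M:=\sup_{\mathbb{R}^n} u$ and $m:=\inf_{\mathbb{R}^n} u$ is nonzero; by symmetry it suffices to treat the case $M>0$. Since $u$ is continuous and $u(r)\to 0$ as $r\to+\infty$, I may choose $R>0$ such that $|u(x)|<M/2$ whenever $|x|>R$. Consequently the supremum $M$ is already realized on the compact ball $\overline{B_R}$, so by continuity $u$ attains an absolute maximum at some finite point of $\mathbb{R}^n$. By Corollary \ref{nominimazfra}, $u$ must then be constant, and the constant value is forced to equal $0$ by the vanishing-at-infinity hypothesis, contradicting $M>0$.

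The case $m<0$ is treated in exactly the same way, using the absolute-minimum half of Corollary \ref{nominimazfra}: the infimum is attained at a finite point, so $u$ is constant and the constant is $0$, contradicting $m<0$. Combining both cases gives $M\le 0\le m$, hence $u\equiv 0$.

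I do not expect any substantive obstacle: every tool is already in place and the argument is a routine consequence of the no-extremum principle for radial $s$-harmonic functions combined with compactness. The only mild point requiring attention is to note that continuity together with vanishing at infinity indeed forces any nonzero extremum to be attained at some finite point — this is exactly what allows Corollary \ref{nominimazfra} to be invoked.
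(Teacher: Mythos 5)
Your proof is correct and takes essentially the same route as the paper's: both use continuity together with the vanishing-at-infinity hypothesis to show that any nonzero supremum or infimum is attained at a finite point, then invoke Corollary~\ref{nominimazfra} (the no-extremum principle for radial $s$-harmonic functions) to force $u$ to be constant, hence zero. Your case split ($M>0$ versus $m<0$ after assuming $u\not\equiv 0$) is a cleaner reorganization of the paper's split on $\sup u=0$ versus $\inf u=0$, but the underlying argument is identical.
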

\begin{proof}
If $\sup_{\mathbb{R}^n}u=0$ then $u$ realizes its minimum in $\mathbb{R}^n$. Then we can apply Corollary \ref{nominimazfra} obtaining that $u$ is constant and necessarely $u\equiv 0.$ Analogously if $\inf_{\mathbb{R}^n}u=0$ then $u$ has to attain  its maximum in $\mathbb{R}^n$ otherwise $u\equiv 0.$ Thus by Corollary \ref{nominimazfra} we conclude that $u\equiv 0$ indeed.
\end{proof}
\begin{corollary}\label{uniqueness}
Let $u_1, u_2$ be  continuous radially symmetric functions such that 
$$
(-\Delta)^su_1=(-\Delta)^su_2.
$$
If $\lim_{r\to+\infty}(u_1-u_2)=0$ then $u_1\equiv u_2.$
\end{corollary}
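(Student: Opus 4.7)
The plan is to reduce Corollary \ref{uniqueness} immediately to the preceding corollary (the Liouville-type statement for radial continuous $s$-harmonic functions vanishing at infinity). Set $v := u_1 - u_2$. Since $u_1$ and $u_2$ are continuous and radially symmetric, so is $v$, and $v$ satisfies the integrability condition \eqref{integrability} because each $u_i$ must satisfy it in order for the hypothesis $(-\Delta)^s u_1 = (-\Delta)^s u_2$ to be meaningful (in the distributional sense \eqref{distr}). By linearity of $(-\Delta)^s$ on tempered distributions, the identity $(-\Delta)^s u_1 = (-\Delta)^s u_2$ is equivalent to
\begin{equation*}
(-\Delta)^s v = 0 \quad \text{in } D'(\mathbb{R}^n),
\end{equation*}
so $v$ is both $s$-subharmonic and $s$-superharmonic, hence $s$-harmonic on $\mathbb{R}^n$.

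Next I would translate the limit hypothesis: the assumption $\lim_{r\to+\infty}(u_1-u_2)=0$ is precisely $\lim_{r\to+\infty} v(r) = 0$. Thus $v$ is a continuous radially symmetric $s$-harmonic function that vanishes at infinity. Applying the preceding corollary directly yields $v \equiv 0$, i.e., $u_1 \equiv u_2$.

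There is essentially no obstacle here: the argument is a two-line invocation of linearity followed by the previous corollary. The only point that merits a moment's care is verifying that linearity transfers from the individual distributional identities to the difference, which is immediate from the definition \eqref{distr} applied to an arbitrary test function $h \in C^\infty_0(\mathbb{R}^n)$:
\begin{equation*}
\langle (-\Delta)^s v, h\rangle = \int_{\mathbb{R}^n} (u_1-u_2)\,(-\Delta)^s h\,dx = \langle (-\Delta)^s u_1, h\rangle - \langle (-\Delta)^s u_2, h\rangle = 0,
\end{equation*}
where the splitting is justified by the integrability of each $u_i$ against $(-\Delta)^s h$, which decays like $(1+|x|)^{-n-2s}$.
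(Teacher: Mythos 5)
Your argument is correct and is essentially the same as the paper's: pass to $v=u_1-u_2$, use linearity of $(-\Delta)^s$ to conclude $v$ is a radial continuous $s$-harmonic function vanishing at infinity, and invoke the preceding corollary to get $v\equiv 0$. You have in fact spelled out the linearity and integrability points more carefully than the paper, and you correctly identify the intended target of the (self-referential, evidently misnumbered) citation in the paper's proof as the preceding Liouville-type corollary.
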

\begin{proof}
Recalling the linearity of the fractional Laplace operator we get: 
$$
(-\Delta)^s(u_1-u_2)=0. 
$$
Moreover $\lim_{r\to+\infty}(u_1-u_2)=0.$  It follows by Corollary  \ref{uniqueness} that $u_1\equiv u_2.$
\end{proof}

Our results make it possible to provide an answer to a question which was left open in \cite{FFV}. More precisely, in \cite{FFV}, Theorem 2.1, an inequality involving $k$-convex functions and the fractional Laplace operator was proved. For reader's  convenience we state below the result in \cite{FFV}.
\begin{proposition}[Ferrari-Franchi-Verbitsky]\label{direct}
$1 \le k < \frac {n}{2}$, and let $\alpha=\frac{2k}{k+1}$.
Suppose  $u\in C^2(\mathbb{R}^n)$ is a $k-$convex function on  $\mathbb{R}^n$ vanishing at $\infty$. If
\begin{itemize}
\item[(i)] $-(-\Delta)^{\alpha/2}u\geq 0,$
\item[(ii)] $(-\Delta)^{\alpha/2}[-(-\Delta)^{\alpha/2}u]^k\geq 0,$
\end{itemize}
then there exists a  positive constant $C_{k, n}$ such that
\begin{equation}\label{eq-2.1}
\int_{\mathbb{R}^n}\left (-(-\Delta)^{\alpha/2}u\right )^{k+1}dx\leq C_{k, n} \, \int_{\mathbb{R}^n}-u \, F_k[u] \, dx.
\end{equation}
\end{proposition}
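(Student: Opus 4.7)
The plan is to split the argument into two moves: a global fractional integration-by-parts identity that uses both hypotheses (i) and (ii) to recast the left-hand side as a weighted integral involving $-u$, and then a nonlinear potential-theoretic comparison that bounds this integral by the Hessian energy on the right.

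I would set $v := -(-\Delta)^{\alpha/2} u$ with $s := \alpha/2 = k/(k+1)$. Hypothesis (i) gives $v \ge 0$, and because $u$ is $k$-convex with $u \to 0$ at infinity, the subharmonicity $\Delta u = F_1[u]\ge 0$ together with the maximum principle forces $u \le 0$. Inverting the fractional Laplacian via the Riesz kernel $I_{2s}$ gives $-u = I_{2s}v \ge 0$, with enough decay to legitimize the boundary terms in what follows. The self-adjointness of $(-\Delta)^s$ then yields the identity
$$
\int_{\mathbb{R}^n} v^{k+1}\, dx = \int_{\mathbb{R}^n} v\cdot v^k\, dx = -\int_{\mathbb{R}^n} (-\Delta)^s u\cdot v^k\, dx = \int_{\mathbb{R}^n} (-u)\,(-\Delta)^s(v^k)\, dx.
$$
By hypothesis (ii), the quantity $\mu := (-\Delta)^s(v^k)$ is a nonnegative locally finite Borel measure, so the right-hand side is a legitimate positive pairing of $-u \ge 0$ against $\mu$.

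The crux is then a comparison between $\mu$ and $F_k[u]$ after testing against $-u$. The natural target is the pointwise nonlinear potential bound
$$
v(x)^{k} \;=\; \bigl[-(-\Delta)^{s} u(x)\bigr]^{k} \;\le\; C_{k,n}\, I_{2s}\bigl[F_k[u]\bigr](x), \qquad x\in\mathbb{R}^n.
$$
Granting this, the proof closes by another fractional integration by parts together with $I_{2s}[v]=-u$:
$$
\int_{\mathbb{R}^n} v^{k+1}\, dx = \int_{\mathbb{R}^n} v\cdot v^k\, dx \le C\!\int_{\mathbb{R}^n} v\cdot I_{2s}[F_k[u]]\, dx = C\!\int_{\mathbb{R}^n} I_{2s}[v]\cdot F_k[u]\, dx = C\!\int_{\mathbb{R}^n} (-u)\,F_k[u]\, dx.
$$

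The main obstacle is therefore the pointwise inequality $v^k \le C\, I_{2s}[F_k[u]]$. Its plausibility is supported by scaling: both sides carry the same homogeneity once one uses $2s(k+1)=2k$. In the model case $k=1$, $s=1/2$ it collapses to the exact identity $(-\Delta)^{1/2}\bigl[-(-\Delta)^{1/2}u\bigr] = -\Delta u = F_1[u]$, with equality and $C=1$. For $k \ge 2$ the standard route is to combine the representation $-u = I_{2s}v$ with Labutin's Wolff-potential estimate $-u \asymp \mathbf{W}_{2s,\,k+1}[F_k[u]]$, valid for $k$-convex functions vanishing at infinity, and then exploit Havin--Maz'ya type comparisons between Wolff and Riesz potentials; hypothesis (ii) provides precisely the nonlinear self-improvement needed to pass from a Wolff bound on $-u$ to a Riesz bound on $v^k$. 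This potential-theoretic step is the technically heavy part of the proof; everything else is formal fractional integration by parts.
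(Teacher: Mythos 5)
First, note that the present paper only restates Proposition~\ref{direct} from \cite{FFV}; the proof is not reproduced here. As for your argument: the preliminary bookkeeping is correct. With $s=k/(k+1)$ and $v:=-(-\Delta)^s u$, hypothesis (i) gives $v\ge 0$, $k$-convexity together with decay at infinity gives $-u=I_{2s}v\ge 0$, and self-adjointness gives $\int v^{k+1}\,dx=\int(-u)\,(-\Delta)^s(v^k)\,dx$, the right side being a genuine pairing because $\mu:=(-\Delta)^s(v^k)\ge 0$ by (ii). The gap is that your closing chain then uses neither this identity nor hypothesis (ii): it rests entirely on the unproved pointwise estimate
$$
v^k \le C\,I_{2s}[F_k[u]],
$$
which carries essentially all the content of the proposition. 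The ingredients you name do not yield it. Labutin-type estimates give a two-sided \emph{Wolff}-potential comparison $-u \asymp W_{2s,k+1}[F_k[u]]$, and (ii) exhibits $v^k=I_{2s}\mu$ as a Riesz potential of a nonnegative measure, but no Havin--Maz'ya or Hedberg--Wolff mechanism upgrades these two facts into a pointwise domination $I_{2s}\mu\le C\,I_{2s}[F_k[u]]$: those comparisons operate at the level of energies and capacities, and one cannot apply $(-\Delta)^s$ to a two-sided Wolff estimate and keep the sign. The $k=1$ check offers no evidence for $k\ge 2$, since there the claim degenerates to an exact linear identity. The argument should instead stay at the integrated level: combine $\int v^{k+1}=\int(-u)\,d\mu$ with Labutin's bound on $-u$, then a bilinear Wolff/Hedberg energy estimate together with the identity $\int W_{2s,k+1}[\mu]\,d\mu\asymp\|I_{2s}\mu\|_{(k+1)/k}^{(k+1)/k}=\int v^{k+1}$ --- this is precisely where $\mu\ge 0$, i.e.\ hypothesis (ii), is indispensable --- and then absorb the factor $(\int v^{k+1})^{k/(k+1)}$. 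The pointwise shortcut you substitute for that step is not justified.
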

It was unclear if, for $1 \le k < \frac {n}{2}$, the set
$$
\mathcal{F}\mathcal{C}_k=\{u\in C^2(\mathbb{R}^n):\:\: u,\:\:k-\mbox{convex},\:\:\mbox{vanishing at }\:\infty,\:\mbox{(i) and (ii) hold}\},
$$
 was not trivial. Obviously  is not empty because for any $k,$  $1 \le k < \frac {n}{2},$ $0\in \mathcal{F}\mathcal{C}_k.$

Theorems~\ref{charachter_s_subharmonic_int}  and \ref{s-subharm} in this paper will be employed to give a positive answer to this question. 
Indeed, 
$$
f_k(x)=-(1+\mid x\mid^2)^{-(\frac{n}{2k}-1)}
$$
is $k$-convex, vanishing at $\infty$, and  $-(-\Delta)^{\alpha/2}f_k\geq 0,$ for every $k\in \mathbb{N}$ such that $\frac{n}{2}\geq k,$ i.e. $f_k$ satisfies (i). Moreover if $k=1,$ then $\alpha=1,$ and condition (ii) becomes
$$
(-\Delta)^{1/2}[-(-\Delta)^{1/2}f_1]=\Delta f_1\geq 0.
$$
The case $k\ge 2$, which  is much harder, is considered 
in the next two sections.

\section{The iterated fractional Laplacian condition}\label{fractional-hessian} 

In this section we verify the iterated fractional Laplacian condition of 
Proposition~\ref{direct}:
   \begin{equation}\label{iterated}
(-\Delta)^{\alpha/2}[-(-\Delta)^{\alpha/2}u]^k\geq 0, 
    \end{equation} 
   where $\alpha=\frac{2k}{k+1}$, $1\le k <  \frac {n}{2}$, for the $k$-convex function $u(x)=-(1+|x|^2)^{-\beta/2}$ on 
    $\mathbb{R}^n$ with 
    $\beta=\frac{n}{k}-2>0$. As explained above, 
    the case $k=1$ is trivial, and from now on we will assume $k\ge 2$. We observe that $u$ is an extremal function in the important Hessian Sobolev inequality of X.-J. 
    Wang \cite{W1}, \cite{W2}.  
    The Fourier transform of $u$ is given by the radially symmetric  function 
     \begin{equation}\label{bessel}  
    \hat u (\xi) = -C(\beta,n) \frac{K_{\frac{n-\beta}{2}}(|\xi|)}{|\xi|^{\frac{n-\beta}{2}}}, \quad \xi \in \mathbb{R}^n, 
      \end{equation}   
      where $C(\beta,n)$ is a positive constant, and $K_\gamma$ is the modified Bessel function of order $\gamma$. 
      
    It is possible to express $(-\Delta)^{\alpha/2}u$ in terms of 
    the Gaussian hypergeometric function 
    $$F(a,b,c,x)= _{2}{\!\!}F_{1}(a,b,c,x)$$ discussed in the next section. For radially symmetric functions $f(r)$, where $r=|x|$, the Fourier transform formula 
   can be stated in the form:
       \begin{equation}\label{radial-fourier}
       \hat f(s)= (2 \pi)^{n/2} \int_0^{+\infty} \frac{J_{\frac{n-2}{2}}(sr)}{(sr)^{\frac{n-2}{2}}} f(r) \, r^{n-1} \, dr, 
       \end{equation} 
       where $s=| \xi |$.
       Hence, from the preceding formulas we deduce: 
              \begin{equation}\label{radial-fractional}
           -(-\Delta)^{\alpha/2}u(s) = C(\alpha, \beta, n) \, s^{\frac{2-n}{2}}\int_0^{+\infty} K_{\frac{n-\beta}{2}}(r) \, J_{\frac{n-2}{2}}(sr) \, r^{\frac{\beta}{2}+\alpha} \, dr,     
            \end{equation}   
            where $C(\alpha, \beta, n)$ is a positive constant.    
            
            Using the well-known integral involving the product of Bessel functions $J_\gamma$ and modified Bessel functions $K_\gamma$, we obtain the explicit formula for $(-\Delta)^{\alpha/2}u$ (see e.g., \cite{MOS}, Sec. 3.8, p. 100): 
           \begin{equation}\label{explicit}        
           -(-\Delta)^{\alpha/2}u (x)= C(\alpha, \beta, n) \, F(a,b,c, -|x|^2), 
           \quad x \in \mathbb{R}^n,
      \end{equation}  
where $C(\alpha, \beta, n)$ is a positive constant. The parameters $a$, $b$, and $c$ are given by:  
       \begin{equation}\label{parameters}  
a= \frac{n+\alpha}{2}, \quad b= \frac{\alpha+\beta}{2}, \quad c= \frac{n}{2},
   \end{equation} 
   where 
 \begin{equation}\label{parameters1}  
 \alpha = \frac{2k}{k+1}, \quad \beta =\frac{n}{k}-2, \quad n\ge 2k,  
 \quad k \ge 2.
   \end{equation}  
   Notice that $a>c>b>0$ and $a>b+1$ ($a=b+1$ when $k=1$) in our case. To verify (\ref{iterated}), we need 
   to demonstrate: 
      \begin{equation}\label{k-power} 
      (-\Delta)^{\alpha/2} \left [ F(a,b,c, -|x|^2) \right ]^k \ge 0. 
      \end{equation}  
      
      Let 
            \begin{equation}\label{k-function} 
      \phi(x) = F(a,b,c, -|x|^2), \quad x \in \mathbb{R}^n.
       \end{equation} 
    By (\ref{f-asymp}), $\phi$ clearly satisfies condition (\ref{ineFFV}).    Invoking Theorems~\ref{s-subharm} and \ref{charachter_s_subharmonic_int}, we see that (\ref{k-power}) follows from condition (\ref{cond-2}): 
         \begin{equation*}   
          ([\phi(r)]^k)'' + \frac{n-\alpha+1}{r}  ([\phi(r)]^k)' \le 0, \quad r \ge 0.
           \end{equation*}       
      By direct differentiation, 
              \begin{equation*}
       ([\phi(r)]^k)'' + \frac{n-\alpha+1}{r}([\phi(r)]^k)'   = k [\phi(r)]^{k-1} \left [ 
       \phi''(r) + (k-1) \, \frac{ [\phi'(r)]^2}{\phi(r)} +  \frac{n-\alpha+1}{r} \phi'(r)
       \right ]. 
            \end{equation*} 
 Hence it suffices to verify the inequality 
\begin{equation}\label{k-ineq}
  \phi''(r) + (k-1) \, \frac{ [\phi'(r)]^2}{\phi(r)} +  \frac{n-\alpha+1}{r} \phi'(r)\le 0, \quad r>0. 
    \end{equation} 
    Since $\phi(r)=F(-r^2)$, where $F(x) = F(a,b,c,x)$, we have 
    $\phi'(r)=-2r F'(-r^2)$ and $\phi''(r)=4r^2 F''(-r) -2F'(-r^2)$. It follows that the preceding inequality 
    is equivalent to: 
     \begin{equation}\label{k-inequa1}
  F''(-r^2) + (k-1) \, \frac{ [F'(-r^2)]^2}{F(-r^2)} -  \frac{n-\alpha+2}{2r^2} F'(-r^2)\le 0, \quad r>0. 
    \end{equation} 
     Letting $x=r^2$, $x > 0$, and noticing that $ \frac{n-\alpha+2}{2}=2c -a+1$ by (\ref{parameters}), 
     we rewrite the preceding inequality as: 
      \begin{equation}\label{k-x}
  F''(-x) + (k-1) \, \frac{ [F'(-x)]^2}{F(-x)} -  \frac{2c-a+1}{x} F'(-x)\le 0, \quad x>0. 
    \end{equation} 
  Using the hypergeometric equation (\cite{AAR}, p. 75):
      \begin{equation}\label{hyper-geo-eq}
 x(1+x) F''(-x) - (c+(a+b+1)x) \, F'(-x) + ab  F(-x)=0, 
    \end{equation} 
 we eliminate the second derivative in (\ref{k-x}):
        \begin{equation*}
        (k-1) [F'(-x)]^2 +\left [ \frac{c+(a+b+1)x}{x(1+x)} - 
     \frac{2c-a+1}{x}     \right ] F'(-x) \, F(-x) - \frac{a b}{x(1+x)} [F(-x)]^2 \le 0. 
           \end{equation*} 
           
           Letting  $\lambda = \frac{F'(-x)}{F(-x)}$, where $x \ge 0$, we 
           reduce the preceding estimate to the quadratic inequality 
             \begin{equation}\label{hyper-ineq3}
             x(1+x)(k-1) \lambda^2 + [(2a+b-2c)x +a-c-1] \, \lambda 
      - a b\le 0,  
                    \end{equation}    
 for $x > 0$.  From the differentiation formula (\cite{AAR}, p. 94), 
   \begin{equation}\label{derivative}
F'(a,b,c, x)= \frac{ab}{c} F(a+1,b+1,c+1, x),  
     \end{equation} 
it follows: 
   \begin{equation}\label{lambda}
\lambda= \frac{ab}{c} \frac{F(a+1,b+1,c+1, -x)}{F(a,b,c, -x)}>0, \quad x > 0. 
     \end{equation} 
     Solving inequality (\ref{hyper-ineq3}) for $\lambda>0$, we obtain: 
       \begin{equation}\label{hyper-ineq4a}
             0< \lambda \le   \frac{-(d_1 x - d_2) + \sqrt{ (d_1x - d_2)^2 + d_3 x(1+x)} }{2(k-1)x(1+x)}, \quad x \ge 0. 
                    \end{equation}  
where 
\begin{equation}\label{parameters2} 
d_1=2a +b-2c, \quad d_2=c-a+1, \quad d_3=   4  a b (k-1) = 4a (2c-a-b)
 \end{equation}  
 by (\ref{parameters1}). 
   Equivalently,     
       \begin{equation}\label{hyper-ineq4}
             0< \lambda \le   \frac{2ab}{d_1 x - d_2 + \sqrt{ (d_1x - d_2)^2 + d_3 x(1+x)} }, \quad x > 0, 
                    \end{equation}    
Using (\ref{lambda}), 
     we see that  (\ref{hyper-ineq3}) is equivalent to: 
     \begin{equation}\label{hyper-ineq5}
             \frac{F(a,b,c, -x)}{F(a+1,b+1,c+1,-x)} \ge    \frac{d_1 x - d_2 + \sqrt{ (d_1x - d_2)^2 + d_3 x(1+x)} } {2c}, \quad x > 0. 
\end{equation}

We next use Pfaff's transformation (\cite{AAR}, Theorem 2.2.5, p. 68): 
  \begin{equation}\label{pfaff}
F(a,b,c, y)= (1-y)^{-b} F(c-a,b,c, \frac {y}{y-1}),  \quad \text{for} \, \, \, y<1. 
     \end{equation} 
     Letting $y=-x$, $x\ge 0$, and applying (\ref{pfaff}) to both 
     $F(a,b,c, -x)$ and $F(a+1,b+1,c+1,-x)$, we get 
  \begin{equation}\label{pfaff1}   
    \frac{F(a,b,c, -x)}{F(a+1,b+1,c+1,-x)}   = (1+x)  \frac{F(c-a,b,c, \frac {x}{x+1})}{F(c-a,b+1,c+1,\frac {x}{x+1})}, \quad x > 0. 
\end{equation} 
Letting $t=\frac {x}{x+1}$, we rewrite (\ref{hyper-ineq5}) 
in the equivalent form:
   \begin{equation}\label{hyper-ineq6}
             \frac{F(c-a,b,c, t)}{F(c-a,b+1,c+1,t)} \ge   
              \frac{d_1 t - d_2(1-t) + \sqrt{ (d_1 t  - d_2(1-t) )^2 +d_3 t} } {2c}, \quad 0 < t \le 1, 
\end{equation}    
where $d_1, d_2, d_3$ are given by (\ref{parameters2}). We note that since 
$(c-a)+b< c$ and $(c-a)+ (b+1)<c+1$, it follows that the 
both  hypergeometric functions in the ratio on the left-hand side 
are finite at $t=1$. By  (\ref{parameters-2}),  
   \begin{equation}\label{f-1}
F(c-a,b,c,1)= \frac{\Gamma(c) \Gamma(a-b)}{\Gamma(a) \Gamma(c-b)}, \quad F(c-a,b+1,c+1,1)= \frac{\Gamma(c+1) \Gamma(a-b)}{\Gamma(a+1) \Gamma(c-b)}. 
\end{equation}  
Consequently, 
  \begin{equation}\label{f-2}
\frac{F(c-a,b,c,1)}{F(c-a,b+1,c+1,1)}= \frac{a}{c}. 
\end{equation}  
In fact, since $a>b+1$ for $k\ge 2$, both hypergeometric functions in the ratio are twice continuously differentiable for $t \le 1$, including the endpoint $t=1$ when (see  
\cite{AAR}, Theorem 2.3.2, p. 78). 

To prove (\ref{hyper-ineq6}), we express it in the form:
 \begin{equation}\label{f-g-ineq}
f(t) \ge g(t), \quad 0\le t \le 1, 
\end{equation} 
where  $f$ is the Gaussian ratio: 
\begin{equation}\label{f-convex}
f(t)= \frac{F(c-a,b,c, t)}{F(c-a,b+1,c+1,t)}, \quad -\infty < t \le 1,
\end{equation}  
and 
\begin{equation}\label{def-g}   
   g(t)= \frac{d_1 t - d_2(1-t) + \sqrt{ (d_1 t  - d_2(1-t) )^2 + d_3 t} } {2c},  \quad t\ge 0. 
   \end{equation}

The proof of (\ref{f-g-ineq}) relies on the important observation  that  $f$ is convex (in fact, even logarithmically convex, as is proved in the next section)  while $g$ 
is concave:  

\begin{lemma}\label{lemma-f-g}  The function $g$ defined by (\ref{def-g}) is concave on $[0,+\infty)$. \end{lemma}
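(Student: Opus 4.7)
I would attack this by computing $g''(t)$ explicitly and reducing concavity to a scalar parameter inequality. First I would set $L(t) = (d_1+d_2)t - d_2$, which is affine in $t$, and $P(t) = L(t)^2 + d_3 t$, so that $2c\,g(t) = L(t) + \sqrt{P(t)}$ and therefore $2c\,g''(t) = (\sqrt{P})''(t)$. Writing
$$(\sqrt{P})'' \;=\; \frac{2PP'' - (P')^2}{4\,P^{3/2}},$$
with $P'(t) = 2(d_1+d_2)L(t) + d_3$ and $P''(t) = 2(d_1+d_2)^2$, the numerator expands to $4(d_1+d_2)^2 d_3\, t - 4(d_1+d_2)\,d_3\,L(t) - d_3^2$. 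The key step is the identity $(d_1+d_2)t - L(t) \equiv d_2$ coming from affineness of $L$, which eliminates the $t$-dependent terms and collapses the numerator to the constant expression $d_3\,[4d_2(d_1+d_2) - d_3]$.

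Since $P(0) = d_2^2 > 0$ and $d_3\, t \ge 0$ throughout, $P(t)$ is strictly positive on $[0,+\infty)$, so the sign of $g''(t)$ agrees with the sign of $4d_2(d_1+d_2) - d_3$. Concavity of $g$ then reduces to the single inequality
$$d_3 \;\ge\; 4d_2(d_1+d_2),$$
which I would verify directly from \eqref{parameters2}, \eqref{parameters}, and \eqref{parameters1}. Substituting $\alpha = \frac{2k}{k+1}$ and $\beta = \frac{n}{k}-2$ yields the clean expressions
$$d_2 = \frac{1}{k+1}, \qquad d_1 + d_2 = \frac{2k}{k+1} + \frac{n}{2k}, \qquad d_3 = \frac{(k-1)[n^2(k+1)^2 - 4k^2]}{k(k+1)^2}.$$

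Clearing denominators and introducing $m := n(k+1)$, the target inequality becomes $(k-1)m^2 - 2m - 4k^2(k+1) \ge 0$. For $k\ge 2$ and $n\ge 2k$ the admissible range is $m \ge 2k(k+1) \ge 12$, so the derivative $2(k-1)m - 2$ is positive throughout, and it suffices to check the endpoint $m = 2k(k+1)$. A short factorization would give $4k(k+1)^2(k^2 - k - 1)$, which is nonnegative for $k\ge 2$. The main obstacle is therefore not analytic but algebraic: the derivative calculation relies on a delicate cancellation enabled by affineness of $L$, and the final parameter inequality must be reduced to a one-variable quadratic tractable on the admissible range $n \ge 2k$, $k \ge 2$.
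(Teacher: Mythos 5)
Your proposal is correct and follows essentially the same path as the paper: both of you compute the second derivative of the square-root term, observe that the numerator collapses to the constant $d_3\,[\,4d_2(d_1+d_2)-d_3\,]$ (the student via the affine-$L$ identity, the paper via the discriminant $B^2-4AC$ of $At^2+Bt+C$, which is the same thing), and reduce concavity to the single inequality $d_3\ge 4d_2(d_1+d_2)$. The only difference is the final verification: the paper factors $\tfrac{1}{4}\bigl(d_3-4d_2(d_1+d_2)\bigr)=(c+1)(c-b-1)$ and invokes $c>b+1$, while you substitute the explicit $(k,n)$-parametrization and check a one-variable quadratic — both are valid, with the paper's identity being slightly cleaner.
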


\begin{proof} Let $h(t)= \sqrt{A t^2 + B t +C}$, then 
\begin{equation*}   
   h'(t)= \frac{2 At +B}{2 \sqrt{A t^2 + Bt +C}},  \quad \text{and} \quad 
     h''(t)= \frac{4AC-B^2}{4 (A t^2 + Bt +C)^{\frac 3 2}}. 
   \end{equation*} 
   Hence, $h$ is concave if $B^2-4AC\ge 0$. 
  Let $A t^2 + B t +C= (d_1 t  - d_2(1-t) )^2 + d_3 t $,
  where   $d_1, d_2, d_3$ are defined by  (\ref{parameters2}).  Then clearly, 
    $A=(d_1-d_2)^2$, $B=d_3-2 d_2(d_1+d_2)$,  $C=d_2^2$, 
and  $A t^2 + B t +C\ge 0$ for 
  $t\ge 0$ since $d_3\ge 0$. Clearly, 
  \begin{equation*}  
  B^2-4AC=[d_3-2 d_2(d_1+d_2)]^2 - 4 (d_1+d_2)^2 d_2^2=
  d_3  [d_3- 4d_2 (d_1+d_2)],
     \end{equation*} 
      where  
      \begin{equation*}  
      d_3- 4d_2 (d_1+d_2)=a(2c-a-b)- (c-a+1)(a+b-c+1)= 
      (c+1)(c-b-1) \ge 0. 
       \end{equation*} 
       Notice that $c- b-1\ge 0$. Indeed, from  (\ref{parameters1}) it follows that 
    $n(k-1)\ge 2k$ since  $n \ge 2k$ and $k \ge 2$.  Hence, 
   \begin{equation}\label{f-2b}  
    c-b-1= \frac{n(k-1)-2k + \alpha}{2k}> \frac{n(k-1)-2k}{2k} \ge 0.   
   \end{equation} 
     Thus, $B^2-4AC > 0$, and consequently $h$ is concave. 
     Since $g''=\frac{1} {2c}h''<0$, it follows that $g$  is concave as well.
   \end{proof}

Since $f$ is convex by Theorem~\ref{log-conv}  below, and $g$ is concave by Lemma~\ref{lemma-f-g}, it follows that the graph 
of $f$ lies above the tangent line at $t=1$, while the graph 
of $g$ lies below the tangent line at $t=1$. In other words, 
$f(t) \ge f(1) + (t-1) f'(1)$, while $g(t)\le g(1) + (t-1) g'(1)$ for $0\le t\le 1$. Recall that by  (\ref{f-2}), 
$f(1)= \frac {a}{c}$. Notice that $g(1)= \frac {a}{c}$ as well. Indeed, 
\begin{equation*}  
   g(1)= \frac{d_1 + \sqrt{ d_1^2 + d_3} } {2c}, 
   \end{equation*}
   where 
   \begin{equation*} 
d_1^2 +d_3=a^2-2a(2c-a-b) + (2c-a-b)^2 + 4a(2c-a-b)
\end{equation*} 
 \begin{equation*} 
=a^2+2a(2c-a-b) + (2c-a-b)^2
=(2c-b)^2. 
\end{equation*} 
Since $2c>b$ by  (\ref{parameters1}), we obtain: 
\begin{equation}\label{sq-root}
\sqrt{ d_1^2 + d_3} = 2c-b, 
\end{equation}
and consequently, by (\ref{parameters2}),
\begin{equation*}
g(1)= \frac{d_1+ 2c-b} {2c} =\frac{2a+b-2c +2c-b}{2c}=\frac{a}{c}. 
\end{equation*} 

Thus, $f(1)=g(1)$, and to verify (\ref{f-g-ineq}), it remains to show that $ f'(1) \le  g'(1)$. Let us deduce the formula:
\begin{equation}\label{f-2f}
f'(1)= \frac{a(a-c)}{c(a-b-1)}. 
\end{equation} 
Invoking the differentiation formula 
(\ref{derivative}), we obtain: 
  \begin{equation*}  
f'(t)=\frac{(c-a)b}{c} \, \frac{F(c-a+1,b+1,c+1,t)}
{F(c-a,b+1,c+1,t)} - 
 \frac{(c-a)(b+1)}{c+1} \frac{F(c-a+1,b+2,c+2,t) \, F(c-a,b,c,t)}{F(c-a,b+1,c+1,t)^2}.  
\end{equation*}
 Using  (\ref{f-2}), we deduce from the preceding formula: 
  \begin{equation*} 
 f'(1)=\frac{(c-a)b}{c} \, \frac{F(c-a+1,b+1,c+1,1)}
{F(c-a,b+1,c+1,1)} - 
 \frac{a(c-a)(b+1)}{c(c+1)}\frac{F(c-a+1,b+2,c+2,1)}
 {F(c-a,b+1,c+1,1)}.   
 \end{equation*}
 Applying Gauss's formula (\ref{f-1}) for $F(a,b,c,1)$  (\cite{AAR}, Theorem 2.2.2, p. 66), we obtain
  \begin{equation*} 
f'(1) =\frac{(c-a)b}{c}  \frac{\Gamma(c+1) \Gamma(a-b-1) \Gamma(a+1) \Gamma(c-b)}{\Gamma(a) \Gamma(c-b) \Gamma(c+1) \Gamma(a-b)} 
 \end{equation*}
   \begin{equation*} 
-   \frac{a(c-a)(b+1)}{c(c+1)} 
  \frac{ \Gamma(c+2) \Gamma(a-b-1)  \Gamma(a+1) 
 \Gamma(c-b)}
{\Gamma(a+1) \Gamma(c-b)\Gamma(c+1) \Gamma(a-b) }  
\end{equation*} 
 \begin{equation*} 
 =\frac{(c-a)a b}{c(a-b-1)}  - 
  \frac{a(c-a)(b+1)}{c (a-b-1)}= \frac{a(a-c)}{c(a-b-1)}. 
   \end{equation*} 
This proves (\ref{f-2f}).

On the other hand, by direct differentiation, 
\begin{equation}\label{f-2c}
g'(t)= \frac{1}{2c} \left ( d_1+ d_2 + \frac{2(d_1+d_2)(d_1 t-d_2(1-t))+d_3}{2 \sqrt{(d_1 t-d_2(1-t))^2 +d_3}}\right ). 
\end{equation} 
Hence, 
\begin{equation}\label{f-2d}
g'(1)= \frac{1}{2c} \left ( d_1+ d_2 + \frac{2(d_1+d_2)d_1+d_3}{2 \sqrt{d_1^2 +d_3}}\right ). 
\end{equation} 
We next show that, after simplification, we get:
\begin{equation}\label{f-2e}
g'(1)= 
\frac{a(c+1)}{c(2c-b)}. 
\end{equation} 
To prove this, recall that $\sqrt{d_1^2 +d_3} = 2c-b$ by (\ref{sq-root}), and $d_1=2a+b-2c=a-(2c -a-b)$, $d_1+d_2=a+b-c+1$,  and $d_3=4a(2c-a-b)$ by (\ref{parameters2}). It follows: 
\begin{equation*}
g'(1)= \frac{1}{2c} \left (a+b-c+1+ \frac{(2a+b-2c)(a+b-c+1)+ 2a(2c-a-b)}{2c-b}\right ). 
\end{equation*} 
Simplifying further the right-hand side, we rewrite it as follows: 
\begin{equation*}
\frac{(2c-b)(a+b-c+1)+ (2a+b-2c)(a+b-c+1)+ 2a(2c-a-b)}{2c(2c-b)}
\end{equation*} 
\begin{equation*}
 =  \frac{2a(a+b-c+1)+ 2a(2c-a-b)}{2c(2c-b)} = 
 \frac{a(c+1)}{c(2c-b)}. 
\end{equation*} 
This proves (\ref{f-2e}). 

Now it is not difficult to see that $g'(1) \ge f'(1)$, i.e.,
  \begin{equation}\label{f-2a} 
 \frac{a(a-c)}{c(a-b-1)} \le \frac{a(c+1)}{c(2c-b)}.
 \end{equation}  
 Indeed, by (\ref{parameters1}),  $2c>b>0$ and $a-b-1=\frac{n(k-1)}{2k}>0$ for $k \ge 2$. Hence (\ref{f-2a}) 
 is equivalent to:
  \begin{equation*}  
   (a-c)(2c-b)- (c+1)(a-b-1) \le 0. 
   \end{equation*}
 By factoring, we rewrite this as:
    \begin{equation*}  
    (a-c)(2c-b)- (c+1)(a-b-1) = ac-2c^2 -ab+2bc + c-a+b+1)
       \end{equation*}
         \begin{equation*} 
    =c(a-2c)-b(a-2c) -(a-2c) - (c-b-1)=(a-2c-1)(c-b-1) \le 0. 
   \end{equation*}
 By (\ref{f-2b}), $c-b-1\ge 0$. 
   On the other hand, since $1<\alpha<2$, we have: 
     \begin{equation*}  
   a-2c-1=-\frac{n+2-\alpha}{2}<0,    
   \end{equation*}
   This proves (\ref{f-2a}), and consequently  $g'(1)\ge f'(1)$, which yields (\ref{f-g-ineq}).

       These computations have been verified using \textit{Mathematica}, 
which was also used to check that inequality (\ref{hyper-ineq6}) holds for many concrete values of $k$ and $n$. In the next section, we will complete the proof of this estimate by showing that $f(t)$ is convex if $t \le 1$. 

\begin{remark} 
When $k=1$ and $\alpha=1$, the function $u(x)=-(-\Delta)^{\tfrac 1 2} 
(1+|x|^2)^{2-n}$, 
$n \ge 3$, is $\tfrac 1 2$-subharmonic, i.e. (\ref{iterated}) holds, but inequality (\ref{cond-2}) with $s = \tfrac 1 2$ fails 
for $r$ large. In other words,  (\ref{cond-2}) is sufficient but 
not necessary for a $C^2$ radial function to be $s$-subharmonic. 
\end{remark}

Indeed, if $k=1$, then $a=b+1$, and consequently by 
(\ref{f-1}) and (\ref{f-2f}), it follows that $f'(1)=+\infty$, where 
 \begin{equation*}  
   f(t)= \frac{F(c-a,b,c, t)}{F(c-a,b+1,c+1,t)}, \quad -\infty < t \le 1, 
   \end{equation*} 
Note that inequality (\ref{k-x}), 
and hence (\ref{hyper-ineq3}), is equivalent to (\ref{hyper-ineq6}),  
i.e.,  $f(t)\ge  g(t)$ for $t \le 1$, 
   where $g(t)$ is defined by (\ref{def-g}) with $d_3=0$. Since 
   $f(1)=g(1)=\frac a c$, and $g'(1) < +\infty$ by (\ref{f-2e}), this contradicts $f'(1)=+\infty$.

\section{Logarithmic convexity of the ratio of hypergeometric functions}\label{logconvexity} 

Let $F(a,b,c,x)= \! _{2}F_{1}(a,b,c,x)$ denote the hypergeometric function. We refer to \cite{AAR} and \cite{MOS} for the relevant theory. 
 For $c>b>0$,  it can be defined by 
(\cite{AAR}, p. 65): 
   \begin{equation}\label{hypergeo}
   F(a,b,c,z)= \frac{\Gamma(c)}{\Gamma(b) \Gamma(c-b)} 
   \int_0^1  \frac{s^{b-1}(1-s)^{c-b-1}}{(1-sz)^a} ds, 
     \end{equation} 
     in the complex plane with a cut from $1$ to $\infty$ along the 
     positive real axis. We will show in this section 
     that the Gaussian ratio $\varphi(x)=\frac{F(a,b,c,x)}{F(a,b+1,c+1,x)}$ is logarithmically 
     convex, and hence convex, for $x\in (-\infty, 1)$ if $c>b>0$ and $-1<a<0$. Since $a+b<c$, it follows by (\ref{f-1})
     that $\varphi$ is continuous at $x=1$ as well, and hence $\varphi$  is convex in $(-\infty, 1]$, including the end-point. Applying 
     this to $f(x)= \frac{F(c-a,b,c, x)}{F(c-a,b+1,c+1,x)}$ where 
     $a$, $b$, $c$ are given by (\ref{parameters}) so that $c>b>0$, and $c-a=-\frac{\alpha}{2} \in (-1,0)$, we will conclude the proof of Theorem~\ref{k-convex-example}.

     The proof of the following theorem employs a method developed recently in \cite{KS1} (see also \cite{KS2}) 
     to prove monotonicity and log-concavity in parameters 
     $a, b, c$  of generalized hypergeometric 
     functions and their ratios. 
          
     \begin{theorem}\label{log-conv} Let $-1<a<0$, and $c>b>0$. 
     Then the function 
     $\varphi (x) = \frac{F(a,b,c,x)}{F(a,b+1,c+1,x)}$ 
     is logarithmically convex in $(-\infty, 1)$. 
     \end{theorem}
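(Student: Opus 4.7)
The plan is to establish log-convexity by verifying $(\log\varphi)''(x)\ge 0$ on $(-\infty,1)$. Since $c>b>0$, Euler's integral representation (\ref{hypergeo}) applies, so
\begin{equation*}
\varphi(x)=\frac{b}{c}\cdot\frac{I_0(x)}{I_1(x)},\qquad I_j(x)=\int_0^1 s^{b+j-1}(1-s)^{c-b-1}(1-sx)^{-a}\,ds,\quad j=0,1,
\end{equation*}
so the problem reduces to proving $(\log I_0)''(x)\ge(\log I_1)''(x)$ for every $x<1$.

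For each $j$ introduce the probability density $\rho^j_x(s)=s^{b+j-1}(1-s)^{c-b-1}(1-sx)^{-a}/I_j(x)$ on $(0,1)$. Differentiating $\log I_j$ twice under the integral sign and collecting terms yields the identity
\begin{equation*}
(\log I_j)''(x)=a\,\mathbb{E}_{\rho^j_x}\!\!\left[\frac{s^2}{(1-sx)^2}\right]+a^2\,\text{Var}_{\rho^j_x}\!\!\left(\frac{s}{1-sx}\right).
\end{equation*}
Because $-1<a<0$, the first term is negative and the second positive. The key structural observation is that $\rho^1_x$ is the $s$-size-biased version of $\rho^0_x$: $\mathbb{E}_{\rho^1_x}[h]=\mathbb{E}_{\rho^0_x}[s\,h]/\mathbb{E}_{\rho^0_x}[s]$. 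Since $s/(1-sx)$ is strictly increasing in $s$ on $(0,1)$ when $x<1$, size-biasing stochastically increases $\mathbb{E}[s^2/(1-sx)^2]$, so the expectation contribution to $(\log I_0)''-(\log I_1)''$ already has the correct sign (positive, after multiplication by $a<0$). What remains is to control the variance term, whose sign under size-biasing is not a priori transparent.

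To handle this I would clear denominators and form $I_0^2 I_1^2[(\log I_0)''-(\log I_1)'']$, expanding it as a symmetric multiple integral against the base measure $d\mu(s)=s^{b-1}(1-s)^{c-b-1}\,ds$. Following the Karp--Sitnik method from \cite{KS1}, the goal is to symmetrize the dummy integration variables and regroup the integrand into a manifestly nonnegative combination of squares. The hypothesis $-1<a<0$ then enters crucially through the sign of $a(a+1)$ (equivalently, the concavity of $x\mapsto(1-sx)^{-a}$ in $x$). The main obstacle will be identifying the correct grouping of terms after symmetrization so that the resulting kernel is pointwise nonnegative; this will rely essentially on the fact that the two parameter shifts occur simultaneously, $b\mapsto b+1$ and $c\mapsto c+1$, preserving the Jacobi-type factor $(1-s)^{c-b-1}$ and producing precisely the cancellations needed.

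Once pointwise nonnegativity of the symmetrized kernel is established, log-convexity of $\varphi$ on $(-\infty,1)$ follows. Extension to $x=1$ is then automatic by continuity via Gauss's formula (\ref{parameters-2}), applicable since $a+b<c$, so that both hypergeometric functions in $\varphi$ have finite values at $x=1$. Finally, specializing to the shifted parameters $(c-a,b,c)\mapsto(c-a,b+1,c+1)$ with $a,b,c$ as in (\ref{parameters}) yields convexity of $f$ in (\ref{f-convex}) on $[0,1]$, which was the remaining ingredient for Theorem~\ref{k-convex-example}.
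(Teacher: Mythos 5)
Your setup is right, and the first-order observation (that size-biasing plus monotonicity of $s\mapsto s/(1-sx)$ controls the $\mathbb{E}[Y^2]$ comparison) is exactly the Chebyshev step in the paper. But the proposal stalls precisely where you flag it yourself: you chose the decomposition
$(\log I_j)'' = a\,\mathbb{E}_j[Y^2] + a^2\,\mathrm{Var}_j(Y)$,
and the comparison of $\mathrm{Var}_{\rho^0_x}(Y)$ with $\mathrm{Var}_{\rho^1_x}(Y)$ under size-biasing is \emph{not} a consequence of monotonicity and is not true in general. The subsequent paragraph --- ``clear denominators, symmetrize, find the correct grouping into squares'' --- is a plan, not a proof; no kernel is exhibited and no nonnegativity is verified. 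As it stands the argument has a genuine gap at exactly the point you describe as the ``main obstacle.''

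The gap is avoidable with a different split. The paper writes
\begin{equation*}
(\log\varphi)''(x) \;=\; \frac{f_1''f_2-f_2''f_1}{f_1 f_2} \;-\; \frac{\bigl(f_1'f_2-f_2'f_1\bigr)\bigl(f_1'f_2+f_2'f_1\bigr)}{(f_1 f_2)^2},
\end{equation*}
with $f_1=F(a,b,c,\cdot)$, $f_2=F(a,b+1,c+1,\cdot)$. In your probabilistic notation this is
$(\log\varphi)'' = a(a+1)\bigl(\mathbb{E}_0[Y^2]-\mathbb{E}_1[Y^2]\bigr) - a^2\bigl((\mathbb{E}_0 Y)^2-(\mathbb{E}_1 Y)^2\bigr)$,
i.e.\ one compares the second moments and the \emph{squared} first moments separately, never the variances. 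Then $-1<a<0$ gives $a(a+1)<0$, $f_1',f_2'<0$ hence $f_1'f_2+f_2'f_1<0$, and the two Wronskian-type signs $f_1'f_2-f_2'f_1>0$ and $f_1''f_2-f_2''f_1>0$ both reduce (after $x\mapsto -t$) to a single instance of Chebyshev's integral inequality for the weight $p(s)=s^{b-1}(1+st)^{-a}(1-s)^{c-b-1}$ with the increasing functions $q(s)=s$, $h(s)=s/(1+st)$, and $h_1(s)=s^2/(1+st)^2$. That is precisely the size-biasing argument you already made for $\mathbb{E}[Y^2]$, applied once more to $\mathbb{E}[Y]$; no variance comparison is needed. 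If you rearrange your identity as $(\log I_j)'' = a(a+1)\mathbb{E}_j[Y^2] - a^2(\mathbb{E}_j Y)^2$ instead of the variance form, your own tools close the argument.
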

     
      \begin{proof} Let $x\in (-\infty,1)$. Let $f_1(x) = F(a,b,c,x)$ and $f_2(x) = F(a,b+1,c+1,x)$. Then $(\log \varphi)'(x) = \frac{f_1'(x)}{f_1(x)}-\frac{f_2'(x)}{f_2(x)}$, and 
    \begin{equation*}
      (\log \varphi)''(x) = \frac{f_1''(x)}{f_1(x)}-\frac{f_2''(x)}{f_2(x)} - 
    \left (  \left [\frac{f_1'(x)}{f_1(x)}\right]^2- \left [\frac{f_2'(x)}{f_2(x)}\right]^2\right )
   \end{equation*} 
       \begin{equation*}
       =\frac{f_1''(x) f_2(x) - f_2''(x) f_1(x)}{f_1(x) \, f_2(x)}-
       \frac{[ f_1'(x) f_2(x) - f_2'(x) f_1(x)] \cdot 
       [f_1'(x) f_2(x) + f_2'(x) f_1(x)]}{[f_1(x) \, f_2(x)]^2}. 
          \end{equation*} 
  Clearly, $f_1(x)>0$ and $f_2(x)>0$ by (\ref{hypergeo}). Since $a<0$ and $b>0, c>0$, it follows by the differentiation formula (\ref{derivative}): 
       \begin{equation*}
       f_1'(x)=\frac{a b}{c} \, F(a+1,b+1,c+1,x)<0, \quad  f_2'(x)=\frac{a (b+1)}{c+1} \, F(a+1,b+2,c+2,x)<0,  
         \end{equation*} 
        and consequently, 
         \begin{equation*}
    f_1'(x) f_2(x) + f_2'(x) f_1(x)  <0,  \quad -\infty < x<1. 
         \end{equation*} 
 
         Hence to show that $(\log \varphi)''(x)>0$, it suffices to prove the following inequalities: 
         \begin{equation}\label{mon-conv}
f_1'(x) f_2(x) - f_2'(x) f_1(x)>0, \quad f_1''(x) f_2(x) - f_2''(x) f_1(x)>0, \quad -\infty < x<1. 
\end{equation} 

It will be more convenient to make a substitution $t=-x$, 
and  work with $g_1(t) = f_1(-t)$, $g_2(t)=f_2(-t)$ for $t \in (-1, +\infty)$. Then the preceding inequalities are equivalent to:
  \begin{equation}\label{mon-conv1}
g_1'(t) g_2(t) - g_2'(t) g_1(t)<0, \quad  g_1''(t) g_2(t)-g_2''(t) g_1(t)>0, \quad -1<t<+\infty. 
\end{equation} 

We next employ the integral representation (\ref{hypergeo}) to express 
both $g_1$ and $g_2$ in the  form of Stieltjes-type transformations:
        \begin{equation*}
     g_1(t) =   \int_0^1  s^{b-1} \, (1+st)^{-a} \,  w(s) \, ds, 
     \quad    g_2(t) = \frac {c}{b} \int_0^1  s^{b} \, (1+st)^{-a} \,   w(s) \, ds, \quad - 1<t<+\infty,
         \end{equation*}   
  where 
    \begin{equation*}
   w(s)=\frac{\Gamma(c)}{\Gamma(b) \Gamma(c-b)} (1-s)^{c-b-1}, \quad 0<s<1,
     \end{equation*}   
     is a positive weight function. Then we have:
       \begin{equation*}
     g_1'(t) =  -a \int_0^1  s^{b} \, (1+st)^{-a-1} \,  w(s) \, ds, 
     \quad    g_2'(t) = -a \,  \frac {c}{b} \int_0^1  s^{b+1} \, (1+st)^{-a-1} \,   w(s) \, ds, 
              \end{equation*}   
      \begin{equation*}
     g_1''(t) =  a(a+1) \int_0^1  s^{b+1} \, (1+st)^{-a-2} \,  w(s) \, ds, 
     \quad    g_2''(t) =  a(a+1)  \, \frac {c}{b} \int_0^1  s^{b+2} \, (1+st)^{-a-2} \,   w(s) \, ds, 
         \end{equation*}   
     where $- 1<t<+\infty$. 
     
     To prove the first inequality in (\ref{mon-conv1}), i.e., $g_1'(t) g_2(t)<g_2'(t) g_1(t)$, we rewrite it in the equivalent form: 
        \begin{equation*}
   -a \, \frac {c}{b}   \left(   \int_0^1  s^{b} \, (1+st)^{-a-1} \,  w(s) \, ds\right)   \,  
        \left( \int_0^1  s^{b} \, (1+st)^{-a} \,   w(s) \, ds \right)
                \end{equation*}  
                   \begin{equation*}     
                 < 
     -a \,  \frac {c}{b} \left (\int_0^1  s^{b+1} \, (1+st)^{-a-1} \,   w(s) \, ds\right) \,  \left ( \int_0^1  s^{b-1} \, (1+st)^{-a} \,  w(s) \, ds\right). 
        \end{equation*}  
        Similarly, the second inequality in   (\ref{mon-conv1}), i.e., $g_1''(t) g_2(t)>g_2''(t) g_1(t)$, is equivalent to: 
         \begin{equation*}
   a (a+1)\, \frac {c}{b}   \left(   \int_0^1  s^{b+1} \, (1+st)^{-a-2} \,  w(s) \, ds\right)   \,  
        \left( \int_0^1  s^{b} \, (1+st)^{-a} \,   w(s) \, ds \right)
                \end{equation*}  
                   \begin{equation*}     
                 >
     a (a+1)\,  \frac {c}{b} \left (\int_0^1  s^{b+2} \, (1+st)^{-a-2} \,   w(s) \, ds\right) \,  \left ( \int_0^1  s^{b-1} \, (1+st)^{-a} \,  w(s) \, ds\right). 
        \end{equation*}  
     Fix  $t\in (-1,+\infty)$. 
   Since $c>b>0$, and $-1<a<0$, the first inequality above can be 
        expressed in the form (see \cite{KS1}, p. 341):
        \begin{equation}\label{first-der}
             \left(   \int_0^1  h(s) \,  p(s) \, ds\right)   \,  
        \left( \int_0^1  q(s) \, p(s) \, ds \right)  
        < \left (\int_0^1  q(s) \, h(s) \, p(s) \, ds\right) \,  \left ( \int_0^1  p(s) \, ds\right),
        \end{equation}   
        where $h(s) = \frac{s}{1+st}$, $p(s) = s^{b-1} (1+st)^{-a} w(s)$, $q(s)=s$ and  $w(s)=\frac{\Gamma(c)}{\Gamma(b) \Gamma(c-b)} (1-s)^{c-b-1}$. 
        
        Analogously, the second inequality is equivalent to: 
               \begin{equation}\label{second-der}
  \left(   \int_0^1  h_1(s) \, p(s) \, ds\right)   \,  
        \left( \int_0^1  q(s) \, p(s) \, ds \right )   
                 <
  \left (\int_0^1  q(s) \, h_1(s) \,   p(s) \, ds\right) \,  \left ( \int_0^1 p(s) \, ds\right), 
        \end{equation}  
        where $h_1(s) = \frac{s^2}{(1+st)^2}$, $p(s)$, $q(s)$ and  $w(s)$ are as above. Since $p(s) > 0$, and the functions 
        $q(s)$, $h(s)$, and $h_1(s)$ are increasing on $(0,1)$ for any fixed $t > -1$, both (\ref{first-der}) and (\ref{second-der}) 
       follow from Chebyshev's inequality for monotone functions (see \cite{HLP}).  \end{proof}
    
    \begin{remark} The condition $c>b>0$ in Theorem~\ref{log-conv} 
     can be extended to $c>\min(a,b)>0$ using the symmetry 
     of $F(a,b,c,x)$ in $a$ and $b$.  
     \end{remark}

      \begin{remark} 
      Theorem~\ref{log-conv} holds  for the ratio of  
      hypergeometric functions $_{q+1}F_{q}\left ( (a, \mathbf{b}),\mathbf{c},x\right)$ 
      in place of $_{2}F_{1}(a,b,c,x)$, where $-1<a<0$ and 
      $c_k> b_k>0$, $k=1, 2, \ldots, q$, $q \ge 2$. 
     \end{remark}
     The proof of the logarithmic convexity of the ratio of $_{q+1}{\!}F_{q}$ is similar to the 
     proof of Theorem~\ref{log-conv} using the corresponding 
     Stieltjes type integral representation (\cite{KS1}, Lemma 1).

       \begin{corollary}\label{conv2} Let $0<a-c<1$, and $c>b>0$. 
     Then the ratio  
     $ \frac{F(a,b,c,x)}{F(a+1,b+1,c+1,x)}$ 
     is convex in $(-\infty, 1)$. 
     \end{corollary}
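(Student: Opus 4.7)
The plan is to reduce Corollary~\ref{conv2} to Theorem~\ref{log-conv} by Pfaff's transformation (\ref{pfaff}). Applied in the form $F(a,b,c,x)=(1-x)^{-b}F(c-a,b,c,\tfrac{x}{x-1})$ to both numerator and denominator, the powers of $(1-x)$ combine as $(1-x)^{-b+(b+1)}=(1-x)$, yielding
\[
\frac{F(a,b,c,x)}{F(a+1,b+1,c+1,x)}=(1-x)\,\varphi(y),\qquad y=\frac{x}{x-1},
\]
where $\varphi(y)=\frac{F(c-a,b,c,y)}{F(c-a,b+1,c+1,y)}$. Note that $x\mapsto y(x)$ maps $(-\infty,1)$ bijectively onto itself, with inverse given by the same Möbius map, so $1-x=\frac{1}{1-y}$.

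The next step is to invoke Theorem~\ref{log-conv} with the first parameter replaced by $a':=c-a$. The hypothesis $0<a-c<1$ gives $-1<a'<0$, while the condition $c>b>0$ is inherited unchanged. Hence $\varphi$ is logarithmically convex on $(-\infty,1)$, and since the integral representation (\ref{hypergeo}) shows that $\varphi>0$ there, log-convexity implies ordinary convexity: $\varphi''\ge 0$ on $(-\infty,1)$.

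It remains to push this convexity through the change of variable. Setting $h(x)=(1-x)\varphi(y(x))$ and using $y'(x)=-(x-1)^{-2}$, $y''(x)=2(x-1)^{-3}$, a direct computation gives
\[
h'(x)=-\varphi(y)+\frac{\varphi'(y)}{x-1},\qquad h''(x)=\frac{\varphi''(y)}{(1-x)^3}.
\]
The cross-terms arising from the product and chain rules cancel cleanly because $1-x=\frac{1}{1-y}$, which is the mechanism that makes the Möbius change of variable near-involutive. Since $1-x>0$ and $\varphi''(y)\ge 0$, we conclude $h''(x)\ge 0$ on $(-\infty,1)$, which proves the corollary.

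I do not anticipate a genuine obstacle: the only point requiring care is the verification of the simplified formula for $h''(x)$, which is a routine check. One could also record that the positivity of the denominator $F(c-a,b+1,c+1,y)$ on $(-\infty,1)$ follows from (\ref{hypergeo}) since $(1-sy)^{-(c-a)}>0$ for $y<1$ and $0<s<1$, so that the ratio $\varphi$ is well-defined and smooth throughout the relevant interval.
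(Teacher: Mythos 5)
Your proof is correct and takes essentially the same route as the paper's: applying Pfaff's transformation to write the ratio as $(1-x)\varphi\bigl(\tfrac{x}{x-1}\bigr)$, invoking Theorem~\ref{log-conv} for the log-convexity (hence convexity) of $\varphi$, and computing that the second derivative reduces cleanly to $\varphi''(y)/(1-x)^3\ge 0$. The only difference is cosmetic: the paper phrases everything in terms of $\varphi_1(-x)$ and the variable $x>-1$, whereas you keep the original variable $x<1$ throughout, but the cancellation of cross-terms and the final sign argument are identical.
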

     
     \begin{proof} Let  $\varphi_1 (-x) = \frac{F(a,b,c,x)}{F(a+1,b+1,c+1,x)}$, $x<1$.  
     By  (\ref{pfaff1}),  
   \begin{equation*} 
   \varphi_1 (x) =       (1+x) \, \varphi_0(\frac{x}{x+1}), \quad x>-1,
    \end{equation*}  
             where     
              \begin{equation*} 
          \varphi_0 (y) =   \frac{F(c-a,b,c,y)}{F(c-a,b+1,c+1,y)}, \quad y<1.  
         \end{equation*}   
        We deduce: 
                            \begin{equation*} 
          \varphi_1' (x) =     \varphi_0(\frac{x}{x+1}) +\varphi_0'(\frac{x}{x+1}) \frac{1}{1+x}, \quad  \varphi_1'' (x) =  
          \varphi_0''(\frac{x}{x+1}) \frac{1}{(x+1)^3} \ge 0,  
         \end{equation*}  
    where $\varphi_0$ is logarithmically convex by  Theorem~\ref{log-conv}, and consequently convex, so that $\varphi_0''(y)\ge 0$, $y<1$.   Then, obviously, $ \varphi_1 (-x)$ is convex  as well. 
                           \end{proof}

{\bf Acknowledgements.} The authors wish to express their thanks for the hospitality during  their respective visits to the mathematics departments of the 
University of Missouri and Universit\`a di Bologna.\\
F.F. was partially supported by PRIN project: 
 `Metodi di viscosit\`a, geometrici e di controllo per modelli diffusivi nonlineari," the GNAMPA project: `Equazioni non lineari su variet\`a: propriet\`a qualitative e classificazione delle soluzioni,"  the ERC starting grant project 2011 EPSILON (Elliptic PDEs and Symmetry of Interfaces and Layers for Odd Nonlinearities), 
 and the Miller Fund at the University of Missouri.\\ 
  I.E.V. was partially supported 
by NSF grant  DMS-0901550.

\end{document}